\documentclass{amsart}
\usepackage{amssymb, amsmath, latexsym}




\renewcommand{\baselinestretch}{\baselinestretch}
\renewcommand{\baselinestretch}{1.1}
\numberwithin{equation}{section}

\newcommand{\Z}{\mathbb Z}

\newcommand{\Q}{\mathbb Q}

\newcommand{\R}{\mathbb R}

\newcommand{\Hyper}{\mathbb H}
\newcommand{\A}{\mathbb A}

\newcommand{\br}{\mathbf r}
\newcommand{\gen}{\text{gen}}
\newcommand{\s}{\mathfrak s}
\newcommand{\n}{\mathfrak n}
\newcommand{\last}{\mathfrak l}

\newcommand{\ord}{\text{ord}}

\newtheorem{thm}{Theorem}[section]
\newtheorem{lem}[thm]{Lemma}
\newtheorem{cor}[thm]{Corollary}
\newtheorem{prop}[thm]{Proposition}

\theoremstyle{definition}

\theoremstyle{remark}
\newtheorem{rmk}[thm]{Remark}

\numberwithin{equation}{section}

\begin{document}

\title[Positive definite strictly $n$-regular quadratic forms]{A Finiteness theorem for positive definite strictly $n$-regular quadratic forms}

\author{Wai Kiu Chan}
\address{Department of Mathematics and Computer Science, Wesleyan University, Middletown CT, 06459, USA}
\email{wkchan@wesleyan.edu}

\author{Alicia Marino}
\address{Department of Mathematics and Computer Science, Wesleyan University, Middletown CT, 06459, USA}
\email{amarino@wesleyan.edu}

\subjclass[2010]{Primary 11E12, 11E20}

\keywords{Strictly regular quadratic forms}


\begin{abstract}
An integral quadratic form is called strictly $n$-regular if it primitively represents all quadratic forms in $n$ variables that are primitively represented by its genus.  For any $n \geq 2$, it will be shown that there are only finitely many similarity classes of positive definite strictly $n$-regular integral quadratic forms in $n + 4$ variables.  This extends the recent finiteness results for strictly regular quaternary quadratic forms by Earnest-Kim-Meyer (2014).
\end{abstract}

\maketitle

\section{Introduction} \label{introduction}

The representation problem is arguably one of the most important problems in the arithmetic theory of quadratic forms.  Over a ring of arithmetic interest $R$, a quadratic form $f(\mathbf x)$ over $R$ in variables $\mathbf x = (x_1, \ldots, x_m)$ is said to represent another quadratic form $g(\mathbf y)$ over $R$ in variables $\mathbf y = (y_1, \ldots, y_n)$ if there exists a matrix $T \in M_{n\times m}(R)$ such that $g(\mathbf y) = f(\mathbf y T)$.  In other words, $g(\mathbf y)$ can be obtained from $f(\mathbf x)$ by a linear change of variables.  The representation is {\em primitive} if $T$ can be extended to a matrix in $\text{GL}_m(R)$.  The famous Hasse-Minkowski principle says that if an integral quadratic form $f$ represents another integral quadratic form $g$ over every completion of $\mathbb Q$, then $f$ must represent $g$ over $\Q$.  It is well-known that such a local-global principle does not hold in general if integral representations are considered instead.  The quadratic form $x^2 + 11y^2$, for example,  represents 3 (or the unary quadratic form $3x^2$) over every $p$-adic integers $\Z_p$ and over $\R$, but it clearly fails to represent 3 over $\Z$.   For an integral quadratic form $f$, the genus of $f$, denoted $\gen(f)$, is the set of quadratic forms that are equivalent to $f$ over every $\Z_p$ and over $\R$.  We say that a quadratic form $g$ is represented by $\gen(f)$ if $g$ is represented by some quadratic form in $\gen(f)$, which is tantamount to saying that $g$ is represented by $f$ over $\R$ and over every $\Z_p$, see \cite[102:5]{OM} or \cite[Theorem 6.1.3]{K}.  An integral quadratic form is called {\em $n$-regular} if it represents all the $n$-ary quadratic forms that are represented by its genus.  A quadratic form in $n$ or more variables and with only one equivalence class in its genus is  an example of an $n$-regular quadratic form.

Historically, 1-regular quadratic forms are simply called {\em regular}.  It was Dickson \cite{D} who first studied regular quadratic forms systematically, and later Watson \cite{WThesis, W54} showed that there are only finitely many similarity classes of positive definite regular ternary quadratic forms.  Enumeration of these similarity classes has been undertaken by several authors \cite{JKS, LO, Oh11}. The enumeration is almost completed, although the regularity of several candidates cannot be verified without assuming a form of the Generalized Riemann Hypothesis \cite{LO}.  Watson's finiteness results have been generalized to positive definite 2-regular quaternary quadratic forms by Earnest \cite{E94} and to positive definite $n$-regular quadratic forms in $n + 3$ variables ($n \geq 2$) by Chan-Oh \cite{CO03}.


Watson's finiteness result for regular ternary quadratic forms cannot be extended to quaternary quadratic forms, as Earnest exhibits in \cite{E95} an infinite family of similarity classes of positive definite regular quaternary quadratic forms.  However, he and his collaborators recently show in \cite{EKM14} that a finiteness result holds if primitive representations are considered.  A quadratic form is called {\em strictly $n$-regular} if it primitively represents all $n$-ary quadratic forms that are primitively represented by its genus.  As in the regular case, a strictly 1-regular quadratic form is simply called strictly regular.  In this terminology, the main result in \cite{EKM14} says that there are only finitely many similarity classes of positive definite strictly regular quaternary quadratic forms.  In this paper, we will obtain a similar finiteness result for strictly $n$-regular quadratic forms for {\em all} $n \geq 2$.  Our main result is:

\begin{thm} \label{mainthm}
For every $n \geq 2$, there are only finitely many similarity classes of positive definite strictly $n$-regular quadratic forms in $n + 4$ variables.
\end{thm}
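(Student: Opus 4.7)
The plan is to follow the strategy pioneered by Watson and refined in \cite{CO03, EKM14}: use Watson's $\lambda_p$-operators to reduce to lattices with controlled local invariants, then show that the discriminant of any such reduced lattice is bounded by a constant depending only on $n$. Once the discriminant is bounded, the classical Minkowski--Hermite bound yields only finitely many similarity classes of positive definite lattices of rank $n+4$ with discriminant at most this constant.

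For each prime $p$, let $\lambda_p(L)$ denote the sublattice of $L$ generated by the vectors $v$ with $Q(v) \equiv 0 \pmod{p}$, rescaled so that the resulting lattice has primitive scale, as in \cite{W54}. The first technical task is to verify that strict $n$-regularity is preserved under $\lambda_p$, possibly outside a finite set of primes depending only on $n$. I would do this by tracing how primitive $n$-ary sublattices of $\lambda_p(M)$ for $M \in \gen(L)$ correspond to primitive $n$-ary sublattices of $M$ itself, using local-global properties of primitive representations; the analogous argument for $n$-regular lattices of rank $n+3$ appears in \cite{CO03}, and the bookkeeping required for primitive representations in rank $n+4$ is similar in spirit to \cite{EKM14}.

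After iteratively applying $\lambda_p$ at all primes, one may assume that $L$ is $\lambda$-\emph{reduced}, meaning that at every prime $p$ the Jordan decomposition of $L_p$ has at most two consecutive scales, so that the local invariants are confined to a narrow window. The key local lemma would then assert: if $L$ is a $\lambda$-reduced strictly $n$-regular lattice of rank $n+4$, then for every prime $p$, the exponent of $p$ in the discriminant of $L$ is bounded by a constant depending only on $n$. To prove this, I would fix a primitive $n$-ary sublattice $M$ of $L_p$ and analyze the orthogonal complement $K$, which has rank $4$; strict $n$-regularity at $p$ forces $L_p$ to primitively represent a large collection of $n$-ary perturbations of $M$, which in turn constrains the Jordan shape of $K$. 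Controlling only finitely many primes contribute at all is standard, since at primes where $L_p$ is unimodular the $p$-part of the discriminant is a unit; combining these facts would bound the discriminant uniformly in terms of $n$.

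The main obstacle is precisely the local step in rank $4$. Unlike the rank-$3$ complements treated in \cite{CO03}, a $4$-dimensional quadratic space over $\Q_p$ can be anisotropic, and such an anisotropic kernel obstructs the tidy Jordan analysis available in the $n+3$ setting. The crux of the proof will therefore be a careful case distinction, depending on whether the rank-$4$ complement $K$ over $\Q_p$ is isotropic or the unique anisotropic $\Q_p$-space, extending the rank-$4$ analysis of \cite{EKM14} to accommodate the additional $n-1$ split summands contributed by the primitive $n$-ary sublattice $M$. Getting a uniform bound on the $p$-part of the discriminant in the anisotropic case, independent of $p$, is the new technical content required beyond \cite{CO03, EKM14}.
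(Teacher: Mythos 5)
Your top-level architecture (Watson transformations, then a discriminant bound, then finiteness of classes of bounded discriminant) matches the paper's, but the mechanism you propose for bounding the discriminant cannot work, because it tries to extract constraints from strict $n$-regularity by a purely \emph{local} analysis at each prime. Regularity is a local-to-global property: it says that a lattice primitively represented by every completion of $\gen(L)$ is primitively represented by $L$ itself, and it imposes no constraint whatsoever on any single $L_p$. Your phrase ``strict $n$-regularity at $p$ forces $L_p$ to primitively represent a large collection of $n$-ary perturbations of $M$'' is vacuous: what $L_p$ primitively represents is determined by $L_p$ alone, independently of whether $L$ is regular. So the ``key local lemma'' constraining the Jordan shape of the rank-$4$ complement has nothing to run on. The only way to use regularity is to construct a \emph{global} lattice $K$ that is primitively represented by the whole genus, conclude $K \hookrightarrow L$ primitively, and then exploit that embedding geometrically. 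This is exactly what the paper does: it bounds the successive minima $\mu_1(L),\ldots,\mu_{n+4}(L)$ one at a time by exhibiting, for a fixed primitive section $M$ of bounded discriminant, an explicit $K$ of bounded minima that is primitively represented by $\gen(L)$ but not primitively represented by $M$ (typically because $K_p$ is forced into the anisotropic part of a local complement), and then invoking Lemma \ref{boundlemma}; the discriminant bound then comes from Hadamard's inequality, not from a prime-by-prime analysis of Jordan splittings.

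Two further concrete problems. First, you dismiss as ``standard'' the claim that only finitely many primes contribute to $d(L)$; this is not free --- a priori a strictly $n$-regular lattice of rank $n+4$ could be non-unimodular at arbitrarily large primes, and bounding the relevant set of primes (the set $\mathcal B$ in the proof of Proposition \ref{mun+3}) is one of the substantial steps, again carried out via the global genus-representation constructions. Without it, even exponent bounds at each prime give no bound on $d(L)$. Second, your instinct that anisotropic quaternary $\Q_p$-spaces are the crux is right but with the sign reversed: in the paper the anisotropy of a rank-$4$ complement --- guaranteed for square discriminant by Lemma \ref{simple1}, or engineered by the choice of $N_p$ in Proposition \ref{boundmin} so that $G_p \cong \langle 1,-\Delta,p,-p\Delta\rangle$ --- is precisely the \emph{tool} (via Lemma \ref{simple2}) for producing lattices the section fails to primitively represent; it is not an obstruction to be circumvented. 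Finally, the paper needs a genuinely separate and more delicate argument when $n=2$ (characteristic primes of quinary spaces, Section 6), which a uniform plan of the kind you describe does not anticipate.
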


Theorem \ref{mainthm} does not hold when $n = 1$.  For, in their classification of positive definite strictly regular diagonal quaternary quadratic forms \cite{EKM15}, Earnest-Kim-Meyer show that there are 27 similarity classes of positive definite strictly universal diagonal quaternary quadratic forms.  These quaternary quadratic forms primitively represent all positive integers.  As a result, there are infinitely many similarity classes of positive definite strictly regular quinary quadratic forms.

The subsequent discussion will be conducted in the geometric language of quadratic spaces and lattices, and any unexplained notations and terminologies can be found in \cite{K} and \cite{OM}.  Without causing much confusion, the quadratic map and its associated bilinear form on any quadratic space will be denoted by $Q$ and $B$, respectively.  For any $R$-lattice $M$, where $R$ is either $\Z$ or $\Z_p$, the scale of $M$ is the ideal $\s(M)$ generated by $B(x, y)$ for all $x, y \in M$, and its norm is the ideal $\n(M)$ generated by $Q(x)$ for all $x \in M$.  The $R$-lattice $M$ is called {\em integral} if $\s(M) \subseteq R$.

We say that an $R$-lattice $M$ is represented by another $R$-lattice $L$ if there exists an isometry $\sigma$ sending $M$ into $L$.  Such a $\sigma$ is called a representation of $M$ by $L$, and is called {\em primitive} if $\sigma(M)$ is a primitive sublattice of $L$, i.e. $\sigma(M)$ is a direct summand of $L$.

For any $R$-lattice $M$, we will write ``$M \cong A$" if $A$ is the Gram matrix of $M$ with respect to some basis of $M$.  A diagonal matrix with entries $a_1, \ldots, a_m$ on the main diagonal is denoted by $\langle a_1, \ldots, a_m \rangle$.  The group of units in $\Z_p$ is denoted by $\Z_p^\times$.  When $p$ is odd, the symbol $\Delta$ is always a nonsquare element in $\Z_p^\times$.  When we discuss lattices over $\Z_p$, $\Hyper$ denotes the hyperbolic plane and $\A$ stands for the anisotropic binary $\Z_p$-lattice, which is $\langle 1, -\Delta \rangle$ if $p > 2$, and $\left (\begin{smallmatrix} 2 & 1 \\ 1 & 2 \end{smallmatrix}\right )$ if $p = 2$.

From now on, the term lattice always refers to a positive definite integral $\Z$-lattice on a quadratic space over $\Q$.   The successive minima of a lattice $M$ are denoted by $\mu_1(M) \leq \cdots \leq \mu_m(M)$, where $m$ is the rank of $M$.  For each $k \leq m$, a $k\times k$ section of $M$ is defined to be the primitive sublattice of $M$ spanned by the first $k$ vectors in a Minkowski reduced basis of $M$.  The discriminant of $M$, denoted $d(M)$, is the determinant of any Gram matrix of $M$.  It is well-known (see for example \cite[103:4]{OM}) that there are only finitely many isometry classes of integral lattices whose discriminants are bounded above by an absolute constant.  The Hadamard inequality (see \cite{E94}), which says that $d(M) \leq \mu_1(M) \cdots \mu_m(M)$, allows us to bound the discriminant by establishing bounds on the successive minima in turn.

We say that a lattice $L$ is {\em $n$-regular} if it represents all $\Z$-lattices of rank $n$ that are represented by its genus; it is called {\em strictly $n$-regular} if it primitively represents all lattices that are primitively represented by its genus.  In this language, our main theorem is equivalent to saying that for every $n \geq 2$, there are only finitely many similarity classes of strictly $n$-regular lattices of rank $n + 4$.

One of the main tools in our argument is a set of regularity-preserving Watson transformations on lattices, which have been proven to be essential in all finiteness results on lattices that satisfy various kinds of regularity properties.  We will revisit these transformations in Section \ref{watson} and show that they preserve strict $n$-regularity, provided the lattices satisfy some specific local conditions (Proposition \ref{watson3}).   A lot of other useful properties of these transformations can be found in \cite[Section 2]{CO03}.  We will also use some of the results proved in \cite[Section 3]{CO03} for bounding the successive minima of a lattice which satisfies certain regularity conditions.    The strategy of the proof of Theorem \ref{mainthm} shares some common characteristics with the one used in proving the finiteness results for $n$-regular quadratic forms in \cite{CO03}.  However, the proof of Theorem \ref{mainthm} is largely self-contained and  this paper can be read as a companion of \cite{CO03}\footnote{Corrigenda to \cite{CO03} can be obtained from \texttt{http://wkchan.web.wesleyan.edu}.}.

\section{Preliminaries} \label{prelim}

In this section we will provide some preliminary lemmas which are tailored for the proof the main theorem.

\begin{lem} \label{strict}
Let $L$ be a strictly $n$-regular lattice.  Then
\begin{enumerate}
\item[(a)] $L$ is $n$-regular, and

\item[(b)] $L$ is strictly $(n-1)$-regular.
\end{enumerate}
\end{lem}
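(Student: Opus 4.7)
The plan is to handle the two parts by passing through primitive closures (saturations) in a lattice containing the image of a representation. Throughout I assume, as is customary, that the rank of $L$ is at least $n$, since otherwise both conditions are vacuous.

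For part (a), suppose an $n$-ary lattice $M$ is represented by $\gen(L)$, so there is an isometry $\sigma \colon M \hookrightarrow L'$ for some $L' \in \gen(L)$. Let $N$ be the primitive closure of $\sigma(M)$ in $L'$, i.e.\ the intersection of $L'$ with the $\Q$-span of $\sigma(M)$. Then $N$ is an $n$-ary primitive sublattice of $L'$, so $N$ is primitively represented by $L'$ and hence by $\gen(L)$. Strict $n$-regularity of $L$ then gives a primitive representation of $N$ by $L$, which restricts to a representation of $\sigma(M)$, hence of $M$, by $L$. This proves $n$-regularity.

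For part (b), suppose an $(n-1)$-ary lattice $M$ is primitively represented by $\gen(L)$ via $\sigma \colon M \hookrightarrow L'$ with $\sigma(M)$ primitive in $L'$. Choose any vector $v \in L'$ whose image in $L'\otimes \Q$ is linearly independent from the $\Q$-span of $\sigma(M)$; such a $v$ exists because $\operatorname{rank}(L') = \operatorname{rank}(L) \geq n > n-1$. Let $N$ be the intersection of $L'$ with the $\Q$-span of $\sigma(M)$ together with $v$; then $N$ is an $n$-ary primitive sublattice of $L'$ containing $\sigma(M)$. I will use two elementary facts about primitivity: if $A \subseteq B \subseteq C$ and $A$ is primitive (a direct summand) in $C$, then $A$ is primitive in $B$; and primitivity is transitive. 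The first gives that $\sigma(M)$ is primitive in $N$, and $N$ is primitive in $L'$ by construction.

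Now apply strict $n$-regularity: since $N$ is primitively represented by $L' \in \gen(L)$, there exists a primitive representation $\tau \colon N \hookrightarrow L$. Then $\tau \circ \sigma \colon M \hookrightarrow L$ is a representation of $M$ by $L$, and by the transitivity of primitivity (composition of direct summands is a direct summand), $\tau(\sigma(M))$ is primitive in $L$. Hence $L$ primitively represents $M$, establishing strict $(n-1)$-regularity.

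The main thing to get right is the bookkeeping for primitivity under saturation and composition; once one notes the two elementary facts about primitive sublattices used above, both parts become short. I do not anticipate a genuine obstacle.
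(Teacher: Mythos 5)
Your proof is correct and follows the same strategy as the paper's: enlarge the image of the represented lattice to a rank-$n$ primitive sublattice of a member of the genus (by saturation in part (a), and by adjoining an extra vector and saturating in part (b)), apply strict $n$-regularity, and then restrict, using the standard facts that a direct summand of $C$ contained in $B\subseteq C$ is a direct summand of $B$ and that being a direct summand is transitive. The only difference is one of execution: you work globally inside a representing lattice $L'\in\gen(L)$, whereas the paper keeps everything on $\Q L$ and manufactures the auxiliary lattices by prescribing localizations via \cite[81:14]{OM}; both are valid, and your version is slightly more economical.
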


\begin{proof}
(a)  Let $M$ be a lattice of rank $n$ which is represented by the genus of $L$.  By the Hasse-Minkowski principle, we may assume that $M$ is on the space $\Q L$.  Let $S$ be the set of all primes $p$ at which $M_p$ is not a primitive sublattice of $L_p$.  For each $p \in S$, let $\sigma_p$ be an isometry of $\Q_p L$ such that $\sigma_p(M_p) \subseteq L_p$.  Since $S$ is a finite set, by \cite[81:14]{OM} there exists a lattice $N$ on $\Q M$ such that
$$N_p = \begin{cases}
M_p & \mbox{ if $p \not \in S$},\\
\sigma_p^{-1}(\Q_p(\sigma_p(M_p)) \cap L_p) & \mbox{ if $p \in S$}.
\end{cases}$$
Then $M$ is a sublattice of $N$, and $N_p = M_p$ is a primitive sublattice of $L_p$ for all $p \not \in S$.  Moreover, for every $p \in S$, $\sigma_p(N_p) = \Q_p (\sigma_p(M_p)) \cap L_p$ is a primitive sublattice of $L_p$.  Therefore, $N$ is primitively represented by $\gen(L)$, and it is represented by $L$ because of the strict $n$-regularity of $L$.  Since $M \subseteq N$, $M$ is represented by $L$ as well.

\noindent (b)  Let $K$ be a lattice of rank $n-1$ which is primitively represented by $\gen(L)$.  As in part (a), we may assume that $K$ is on $\Q L$.  Let $T$ be the set of primes $p$ at which $K_p$ is not a primitive sublattice of $L_p$.  This set $T$ is finite since $K_p \subseteq L_p$ and $K_p$ is unimodular at almost all primes $p$.  For every $p \in T$, let $\sigma_p$ be an isometry such that $\sigma_p(K_p)$ is a primitive sublattice of $L_p$.  Then there exists a lattice $N$ on $\Q L$ such that
$$N_p = \begin{cases}
L_p & \mbox{ if $p \not \in T$}\\
\sigma_p(L_p) & \mbox{ if $p \in T$}.
\end{cases}$$
It is clear that $K_p$ is a primitive sublattice of $N_p$ for all $p$, which implies that $K$ is a primitive sublattice of $N$.  Let $v$ be a vector in $N$ such that $K \oplus \Z[v]$ is primitive in $N$. Then $K \oplus \Z[v]$ is primitively represented by $\gen(L)$.  It follows from the strict $n$-regularity of $L$ that $K\oplus \Z[v]$ is primitively represented by $L$, which means that $K$ itself is primitively represented by $L$.
\end{proof}

Lemma \ref{strict} allows us to use results from \cite{CEO04, CO03} to obtain upper bounds on the first few successive minima of a strictly $n$-regular lattice.  We collect those results we need in the following two lemmas for the convenience of the readers.

\begin{lem}{\cite[Corollary 3.2]{CEO04}} \label{CEOlemma}
Let $L$ be a regular lattice of rank at least 4.  Then $\mu_3(L)$ is bounded above by an absolute constant.
\end{lem}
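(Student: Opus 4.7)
The plan is to produce three linearly independent vectors of absolutely bounded norm in $L$ by combining a Tartakowski-type representation theorem for genera of rank at least four with the regularity hypothesis on $L$ and an elementary counting argument.

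For $\mu_1(L)$: a positive definite genus of rank $\geq 4$ represents, by Tartakowski's theorem, every sufficiently large positive integer satisfying the finitely many local conditions imposed by the genus invariants. In particular, the minimum of $\gen(L)$ is bounded above by an absolute constant, and the regularity of $L$ lifts this to a representation by $L$ itself, so $\mu_1(L)$ is absolutely bounded.

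For $\mu_2(L)$ and $\mu_3(L)$: the same density input shows that $\gen(L)$, and hence by regularity $L$ itself, represents a positive proportion of integers in any sufficiently long interval $[1,X]$, with density depending only on the rank and on bounded local data. On the other hand, a rank-one sublattice of $L$ represents $O(\sqrt X)$ integers up to $X$, and a rank-two sublattice represents $O(X/\sqrt{\log X})$ integers up to $X$ by Landau's theorem. Once $X$ exceeds an absolute constant, the count of integers $\leq X$ represented by $L$ exceeds both of these estimates in turn, forcing some representation to lie outside a given rank-one, respectively rank-two, sublattice spanned by the previously constructed short vectors. This produces the required second and third linearly independent vectors of norm $\leq X$, bounding $\mu_2(L)$ and then $\mu_3(L)$.

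The main obstacle is to keep the bounds genuinely independent of $L$. At all but finitely many primes, $L_p$ is unimodular and hence universal, so only the bad primes contribute local obstructions; one must extract from these a modulus and a density constant that depend on the rank alone. The delicate case is rank exactly $4$, where Tartakowski's theorem admits spinor exceptions; these can be handled by passing to the spinor genus and observing that the exceptional classes form a controllable sparse set. Once the local data are made uniform, the counting argument sketched above goes through and yields the claimed absolute bound on $\mu_3(L)$.
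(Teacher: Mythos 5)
The paper offers no proof of this lemma --- it is quoted verbatim from \cite[Corollary 3.2]{CEO04} --- so your argument has to stand on its own, and it does not: the step you yourself flag as ``the main obstacle,'' namely extracting a modulus and a density constant depending on the rank alone, is the entire content of the result, and the sketch gives no way to achieve it. The integers represented by $\gen(L)$ are exactly those represented by every $L_p$ (and by $L_\infty$), so both the minimum of the genus and the density of represented integers are governed by congruence conditions at the primes dividing $2\,d(L)$; neither the number of these primes nor the conditions they impose is bounded in terms of the rank. In the worst case $L_p$ has a unary unimodular Jordan component $\langle u_p \rangle$ followed by components of large scale, and then the units of $\Z_p$ represented by $L_p$ are exactly $u_p(\Z_p^\times)^2$. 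An integer represented by the genus must therefore lie in a prescribed square class modulo each of arbitrarily many primes: the least such integer is a simultaneous--quadratic--residue problem with no known absolute unconditional bound, and the density of such integers decays like $2^{-\omega(d(L))}$. Hence neither ``the minimum of $\gen(L)$ is bounded by an absolute constant'' nor ``a positive proportion with density depending only on the rank'' is justified, and the comparison with the $O(X/\sqrt{\log X})$ count for a binary sublattice then yields a threshold $X$ depending on $\omega(d(L))$ rather than an absolute one. (Two smaller points: Tartakowski's theorem is not what transfers representation from the genus to $L$ --- its threshold depends on $L$ --- but regularity already does that job by definition; and in rank $4$ the relevant subtlety is divisibility at anisotropic primes, not spinor exceptions, which do not occur for representations of integers by forms in four or more variables. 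Also note the lemma tacitly assumes $L$ primitive or normalized, since otherwise rescaling makes $\mu_1(L)$ arbitrarily large.)

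The way \cite{CEO04} (and every argument in this circle of ideas, going back to Watson) overcomes the non-uniformity is precisely the machinery you would need to supply: show that the transformations $\Lambda_{2p}$ preserve regularity, use them to pass to a regular lattice whose discriminant is divisible only by a bounded set of primes to bounded powers, control how the successive minima change under these maps, and construct explicit small lattices represented by the genus but not by a fixed low-rank section (in the spirit of Lemma \ref{boundlemma} and the proof of Proposition \ref{mun+3}). Your counting heuristic is a reasonable final step \emph{after} the local data have been tamed in this way, but as written the proposal proves at best a bound depending on the number of prime divisors of $d(L)$, not an absolute constant.
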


\begin{lem} \label{COlemma}
Let $L$ be a lattice.
\begin{enumerate}
\item[(a)] \cite[Lemma 3.3]{CO03}\footnote{In \cite[Lemma 3.3]{CO03}, $M$ can be any sublattice of $L$ of rank strictly less than the rank of $L$.  In its proof, the vector $y$ is a vector in $L$ such that $Q(y) \leq \mu_{k+1}(L)$ and $y \not \in \Q M$.} Let $M$ be a sublattice of $L$ of rank $k$.  If $k < \text{rank}(L)$ and $\n(M^\perp) \subseteq a\Z$ for some positive integer $a$, then $\mu_{k+1}(L) \geq \frac{a}{(dM)^2}$.

\item[(b)] \cite[Lemma 3.4]{CO03}  Let $M$ be a lattice of rank $k \geq 3$, and suppose that $L$ is $(k-1)$-regular of rank $> k$.  If $M$ is represented by $L$, then $\mu_{k+1}(L)$ is bounded above by a constant depending only on $M$.

\item[(c)] \cite[Lemma 3.5]{CO03} Let $M$ be a lattice of rank $k \geq 5$, and suppose that $L$ is $(k-2)$-regular of rank $> k$.  If $M$ is represented by $L$, then $\mu_{k+1}(L)$ is bounded above by a constant depending only on $M$.
\end{enumerate}
\end{lem}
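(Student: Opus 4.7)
The plan is to handle (a) by a direct projection argument and to deduce (b) and (c) from (a) by constructing, from $M$, a rank-$k$ sublattice of $L$ whose orthogonal complement has norm controlled only in terms of $M$. These statements are recorded in \cite{CO03}, and the sketches below follow the strategy used there.

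For (a), I would exploit $\text{rank}(M) < \text{rank}(L)$ by choosing $y \in L$ with $Q(y) = \mu_{k+1}(L)$ and $y \notin \Q M$. Decomposing $y = y_1 + y_2$ orthogonally in $\Q L$ with $y_1 \in \Q M$ and $y_2 \in \Q M^\perp$, the coordinates of $y_1$ in any basis of $M$ satisfy a linear system whose right-hand sides $B(y, e_j)$ are integers, whose coefficient matrix is the Gram matrix of $M$, and whose determinant is $d(M)$. Cramer's rule places these coordinates in $(1/d(M))\Z$, so $d(M)\, y_1 \in M \subseteq L$ and hence $d(M)\, y_2 \in L \cap \Q M^\perp = M^\perp$. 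Since $y \notin \Q M$ we have $y_2 \neq 0$, and the hypothesis $\n(M^\perp) \subseteq a\Z$ yields $d(M)^2 Q(y_2) \geq a$; therefore $\mu_{k+1}(L) = Q(y) \geq Q(y_2) \geq a/d(M)^2$.

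For (b), identifying $M$ with an embedded copy in $L$, I would aim to construct a rank-$k$ sublattice $N \supseteq M$ of $L$ with both $d(N)$ and $\n(N^\perp)$ bounded in terms of $M$ alone; part~(a) would then bound $\mu_{k+1}(L)$. I would take a primitive rank-$(k-1)$ sublattice $K$ of $M$ and exploit the $(k-1)$-regularity of $L$: by modifying $K$ locally within its genus at the primes dividing $2\, d(M)$, one obtains lattices primitively represented by $\gen(L)$, hence by $L$. These additional representations furnish short vectors in $L$ whose orthogonal interactions with $K$ can be tracked prime by prime, and from these one extracts a vector completing $M$ to the desired $N$. For (c) the same recipe is carried out with a rank-$(k-2)$ $K$, using $(k-2)$-regularity and the fact that $K^\perp$ inside $L$ now has rank at least three; the assumption $k \geq 5$ leaves enough room inside $K$ for the requisite local modifications at each relevant prime to be realized. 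The principal obstacle in both (b) and (c) is arranging these local modifications so that the resulting $N^\perp$ has norm bounded purely in terms of $M$---this uniformity is the technical heart of the arguments in \cite{CO03}.
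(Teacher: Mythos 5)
Your argument for part (a) is correct and is essentially the argument the paper points to: the paper itself offers no proof of this lemma beyond the citation to \cite{CO03}, but its footnote describes exactly your projection step. The Cramer's rule computation puts $d(M)y_1$ in $M$, hence $d(M)y_2$ in $M^\perp$, and positive definiteness together with $\n(M^\perp)\subseteq a\Z$ finishes it. (Pedantically, take $y$ with $Q(y)\le \mu_{k+1}(L)$ and $y\notin \Q M$, which exists because among $k+1$ independent vectors realizing the first $k+1$ minima at least one must leave the $k$-dimensional space $\Q M$; your inequality chain is unaffected.)

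Parts (b) and (c) contain a genuine gap, and it is not only that you defer the ``technical heart'' to \cite{CO03}: the reduction you propose is logically backwards. Part (a) is a \emph{lower} bound on $\mu_{k+1}(L)$ --- it says that large forced divisibility of the norms on $M^\perp$ pushes $\mu_{k+1}(L)$ up --- so producing an $N$ with $d(N)$ and $\n(N^\perp)$ bounded gives no upper bound on $\mu_{k+1}(L)$ at all (note also that $\n(N^\perp)=\Z$ is perfectly compatible with $N^\perp$ having enormous minimum, since the norm ideal records a gcd, not a shortest vector). Moreover, a rank-$k$ sublattice $N\supseteq M$ of $L$ satisfies $\Q N=\Q M$ and hence $N^\perp=M^\perp$, so the proposed enlargement changes nothing, and ``a vector completing $M$ to $N$'' would raise the rank to $k+1$. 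The actual mechanism in \cite{CO03} --- the same one this paper reuses through Lemma \ref{boundlemma} in Propositions \ref{mun+3} and \ref{boundmin} --- runs the other way: one constructs a \emph{witness} lattice $K$ of rank $k-1$ (resp.\ $k-2$) whose invariants depend only on $M$, which is represented by $\gen(L)$ (checked locally using $M_p\perp M_p^\perp\subseteq L_p$ and the extra rank of $L$) but is \emph{not} represented by $M$, for instance because $\Q_pK$ is isotropic where the relevant part of $\Q_pM$ is anisotropic, or by a divisibility obstruction. Regularity then forces $L$ to represent $K$, any such representation must contain a vector outside $\Q M$, and together with a reduced basis of $M$ this yields $k+1$ independent vectors of norm bounded in terms of $M$, i.e.\ the desired upper bound on $\mu_{k+1}(L)$. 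Part (a) enters these arguments only in the reverse direction, to bound the divisibility $a$ once $\mu_{k+1}(L)$ has been controlled by other means.
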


The next lemma will be useful when the last two lemmas do not apply.

\begin{lem}\label{boundlemma}
Let $L$ be a lattice and $M$ be a primitive sublattice of $L$ of rank $m$.  Suppose that $K$ is a lattice of rank $k$ which is represented by $L$ but not by $M$. Then there exists a constant $C$, depending only on $k$ and $m$, such that
$$\mu_{m+1}(L) \leq C \max\{\mu_k(K), \mu_m(M) \}.$$
The same conclusion holds if $K$ is primitively represented by $L$ but not primitively by $M$.  Furthermore, the constant $C$ can be taken to be $1$ when $\max\{k, m \} \leq 4$.
\end{lem}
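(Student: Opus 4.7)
The plan is to exhibit a rank-$(m+1)$ sublattice of $L$ of controlled size, then use the standard monotonicity $\mu_{m+1}(L) \leq \mu_{m+1}(N)$ for any sublattice $N \subseteq L$ of rank $\geq m+1$. The key point is to use the hypothesis ``$K$ is represented by $L$ but not by $M$'' to extract a vector in $L$ of bounded length that lies outside $\Q M$.

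First, let $\sigma : K \rightarrow L$ be a representation. I claim $\sigma(K) \not\subseteq \Q M$. Indeed, since $M$ is primitive in $L$, we have $\Q M \cap L = M$; so $\sigma(K) \subseteq \Q M$ would force $\sigma(K) \subseteq M$, contradicting that $K$ is not represented by $M$. For the primitive version, if $\sigma$ is primitive, the same inclusion $\sigma(K) \subseteq M$ combined with primitivity of $\sigma(K)$ in $L$ gives $\Q \sigma(K) \cap M = \Q \sigma(K) \cap L = \sigma(K)$, so $\sigma(K)$ is primitive in $M$, contradicting the non-primitive-representability hypothesis.

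Next, pick a Minkowski reduced basis $v_1, \ldots, v_k$ of $K$, for which $Q(v_i) \leq c_k \, \mu_i(K)$ with a constant $c_k$ depending only on $k$ (and $c_k = 1$ when $k \leq 4$, since in rank $\leq 4$ one can realize all successive minima by a basis). Since $\sigma(K) \not\subseteq \Q M$, some $\sigma(v_j) \notin \Q M$; choose the smallest such $j$ and set $v = \sigma(v_j)$. Then $v \in L \setminus \Q M$ and
$$Q(v) = Q(v_j) \leq c_k \, \mu_j(K) \leq c_k \, \mu_k(K).$$
Similarly, pick a Minkowski reduced basis $w_1, \ldots, w_m$ of $M$ with $Q(w_i) \leq c_m \, \mu_i(M)$.

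Now form the rank-$(m+1)$ sublattice $N = M + \Z v \subseteq L$. The vectors $w_1, \ldots, w_m, v$ are $m+1$ linearly independent elements of $N$, so
$$\mu_{m+1}(N) \leq \max\bigl(Q(w_1), \ldots, Q(w_m), Q(v)\bigr) \leq \max\bigl(c_m \mu_m(M), \, c_k \mu_k(K)\bigr).$$
Since $N \subseteq L$, taking the $m+1$ witnessing vectors of $\mu_{m+1}(N)$ inside $L$ gives $\mu_{m+1}(L) \leq \mu_{m+1}(N)$. Setting $C = \max(c_k, c_m)$, which depends only on $k$ and $m$, yields the desired inequality, with $C = 1$ once $\max\{k,m\} \leq 4$.

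There is no real obstacle here; the only place requiring a moment of care is the step ``$\sigma(K) \not\subseteq \Q M$'', where the primitivity of $M$ in $L$ is essential, and the separate primitive-representation variant, where one additionally uses that a primitive sublattice of $L$ contained in $M$ is automatically primitive in $M$.
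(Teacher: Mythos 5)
Your proposal is correct and follows essentially the same route as the paper: both arguments take Minkowski reduced bases of $K$ and $M$, use the primitivity of $M$ in $L$ to force some basis vector of (the image of) $K$ outside $\Q M$, bound $\mu_{m+1}(L)$ by the maximum of the relevant $Q$-values, and invoke the standard comparison between Minkowski reduced basis vectors and successive minima (with van der Waerden's result giving $C=1$ in rank $\leq 4$). The handling of the primitive variant --- reducing it to the non-primitive case via the observation that a primitive sublattice of $L$ contained in $M$ is automatically primitive in $M$ --- also matches the paper.
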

\begin{proof}
By replacing $K$ with an isometric copy in $L$, we may assume that $K$ is a sublattice of $L$ but not a sublattice of $M$.  Let $y_1, \ldots, y_k$ and $x_1, \ldots, x_m$ be Minkowski reduced bases of $K$ and $M$, respectively.  Then $y_j$ is not in $M$ for some $1\leq j \leq k$.  Since $M$ is a primitive sublattice of $L$, this $y_j$ must be outside of $\Q M$.  Consequently,
$$\mu_{m+1}(L) \leq \max\{Q(y_k), Q(x_m) \}.$$
By \cite[Chapter 12, Theorem 3.1]{Ca}, there exists a constant $C$, depending only on $k$ and $m$, such that
$$\mu_{m+1}(L) \leq C\, \max\{ \mu_k(K), \mu_m(M) \}.$$

As for the second assertion, we assume that $K$ is a primitive sublattice of $L$, but not a primitive sublattice of $M$.  This certainly means that $K$ cannot be a sublattice of $M$ at all.  We may then continue with the argument in the first paragraph.

The final assertion is a consequence of the fact that every lattice of rank $\ell \leq 4$ has a Minkowski basis $v_1, \ldots, v_\ell$ such that $Q(v_i)$ is the $i$-th successive minimum of that lattice; see \cite{van}.
\end{proof}

The following two lemmas are straightforward consequences of the theory of quadratic spaces and lattices.  Nonetheless, we provide their proofs  for the sake of completeness.

\begin{lem}\label{simple1}
Let $U$ be a positive definite quaternary space with a square discriminant.  Then there is a prime $p$ such that $U_p$ is anisotropic.
\end{lem}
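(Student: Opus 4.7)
The plan is to combine the local classification of quaternary $\Q_p$-spaces of square discriminant with the product formula for Hasse invariants. The essential local input is that over each $\Q_p$ (with $p$ finite) there is, up to isometry, a unique anisotropic quaternary space, and it has square discriminant; consequently every quaternary $\Q_p$-space of square discriminant is either this anisotropic space or the hyperbolic space $\Hyper \perp \Hyper$.

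I would proceed by contradiction, assuming $U_p$ is isotropic---and hence $U_p \cong \Hyper \perp \Hyper$---at every finite prime $p$. The first step is to evaluate the local Hasse--Minkowski invariants: a short computation on $\langle 1,-1,1,-1\rangle$ gives $c_p(U) = (-1,-1)_p$ for every finite $p$, while the positive definiteness of $U$ forces $U_\infty \cong \langle 1,1,1,1\rangle$ and hence $c_\infty(U) = 1$.

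The second step is to invoke the product formula $\prod_v c_v(U) = 1$ (immediate from Hilbert reciprocity applied place by place to each pair $(a_i,a_j)$). Under the standing assumption this reads
$$1 \;=\; c_\infty(U) \prod_{p<\infty} c_p(U) \;=\; \prod_{p<\infty} (-1,-1)_p.$$
However, Hilbert reciprocity for the symbol $(-1,-1)$, together with $(-1,-1)_\infty = -1$, forces $\prod_{p<\infty}(-1,-1)_p = -1$, a contradiction. Hence $U_p$ must be anisotropic at some finite prime $p$.

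The only real obstacle is bookkeeping the Hilbert symbol conventions; once the local dichotomy is secured, the argument is essentially a one-line application of reciprocity. The sign $(-1,-1)_\infty = -1$, which records the anisotropy of $U$ at the archimedean place, supplies the parity obstruction that forces at least one finite prime to carry the anisotropic incarnation of $U_p$.
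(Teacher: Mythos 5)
Your proof is correct and takes essentially the same route as the paper: both identify each isotropic $U_p$ of square discriminant with $\Hyper \perp \Hyper$, compute the local Hasse invariants to be $(-1,-1)_p$ (that is, $+1$ for odd $p$, $-1$ for $p=2$, and $+1$ at the real place since $U$ is positive definite), and obtain a contradiction with Hilbert reciprocity.
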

\begin{proof}
Suppose that $U$ is isotropic at all finite primes $p$.  Then the Hasse invariant of $U_p$ is 1 if $p > 2$ and is $-1$ if $p = 2$; see \cite[Theorem 3.5.1]{K}.  Since the Hasse invariant of $U_\infty$ is 1, the product of all these local Hasse invariants is equal to $-1$. This contradicts the Hilbert Reciprocity.
\end{proof}

\begin{lem}\label{simple2}
Let $K$ be an anisotropic $\Z_p$-lattice.  There exists a positive integer $e = e(K)$ such that $K$ does not primitively represent any $p$-adic integer in $p^e\Z_p$.
\end{lem}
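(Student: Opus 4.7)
The plan is a short compactness argument. First I would reformulate the primitive representability condition: since $\Z_p$ is a principal ideal domain, a unary sublattice $\Z_p v \subseteq K$ is a direct summand (equivalently, primitive) if and only if $v \notin pK$, because a primitive vector can be extended to a $\Z_p$-basis of $K$ while a vector divisible by $p$ cannot. Hence the lemma reduces to showing that the $p$-adic valuation $\ord_p(Q(v))$ is bounded above as $v$ ranges over $K \setminus pK$.

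Next I would exploit compactness. Since $pK$ has finite index $p^{\text{rank}(K)}$ in $K$, it is both open and closed in the $p$-adic topology, so $K \setminus pK$ is a closed subset of the compact space $K \cong \Z_p^{\text{rank}(K)}$ and is itself compact. The function $v \mapsto |Q(v)|_p$ is continuous on $K$, and on the subset $K \setminus pK$ it takes only strictly positive values: if $|Q(v)|_p = 0$ for some $v \in K \setminus pK$, then $v$ is a nonzero vector with $Q(v) = 0$, contradicting the anisotropy of $K$. By compactness this function attains a positive minimum $c > 0$; choosing $e = e(K)$ to be any integer with $p^{-e} < c$ completes the proof, since then $|Q(v)|_p \geq c > p^{-e}$ for every primitive $v \in K$, forcing $Q(v) \notin p^e \Z_p$.

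I do not foresee any serious obstacle. The only points requiring care are verifying that the set of primitive vectors is compact (which relies on $pK$ being open, equivalently of finite index) and that anisotropy is invoked specifically to rule out nonzero isotropic vectors sitting inside $K \setminus pK$, not just inside $\Q_p K$.
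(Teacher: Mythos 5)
Your argument is correct and is essentially the paper's own proof: both rest on the compactness of $K\setminus pK$ together with anisotropy to bound $|Q(v)|_p$ away from $0$ on the primitive vectors (the paper phrases this via a Cauchy sequence forcing $0\in Q(K\setminus pK)$, you phrase it as a continuous positive function attaining its minimum, which is the same idea). Your preliminary observation that primitivity of $\Z_p v$ is equivalent to $v\notin pK$ is a correct and worthwhile clarification, implicitly used by the paper as well.
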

\begin{proof}
The set $P(K):= K\setminus pK$ is a compact subset of $K$.  By continuity, $Q(P(K))$ is also compact.  Suppose that for each positive integer $m$, there is an element $v_m$ in $P(K)$ such that $Q(v_m) \in p^m\Z_p$.  Then $\{Q(v_m)\}_{m \geq 0}$ is a Cauchy sequence whose limit is 0.  By compactness, 0 must be in $Q(P(K))$ which means that $K$ is isotropic.  This is a contradiction.
\end{proof}

The following is a simple but useful lemma in building primitive sublattices of a $\Z_p$-lattice.

\begin{lem} \label{localprim}
Let $L$ be a $\Z_p$-lattice and $N$ be a primitive sublattice of $L$.  Suppose that $v$ is a vector in $L$ such that $\Z_p[v]$ is an orthogonal summand of $L$ and $B(v, N) = 0$.  If $w \in L$ such that $B(w, \Z_p[v] \perp N) = 0$, then $N \perp \Z_p[v + w]$ is a primitive sublattice of $L$.
\end{lem}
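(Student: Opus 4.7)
The plan is to exploit the orthogonal decomposition of $L$ induced by $\Z_p[v]$ and track coordinates. Since $\Z_p[v]$ is an orthogonal summand of $L$, write $L = \Z_p[v] \perp L'$. Using that $\Z_p[v]$ is non-degenerate, the hypothesis $B(v, N) = 0$ forces $N \subseteq L'$, and $B(w, v) = 0$ (which is part of $B(w, \Z_p[v] \perp N) = 0$) forces $w \in L'$. Orthogonality of the proposed sum is then immediate from $B(n, v+w) = B(n,v) + B(n,w) = 0$ for every $n \in N$, and independence holds because any element $c(v+w)$ lying in $N$ would have to have its $\Z_p[v]$-component $cv$ equal to zero, forcing $c = 0$. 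Hence $N \perp \Z_p[v+w]$ is a genuine orthogonal direct sum sitting inside $L$.

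To verify primitivity, suppose $x \in L$ satisfies $p x \in N \oplus \Z_p[v+w]$, say $px = n + c(v+w)$ with $n \in N$ and $c \in \Z_p$. Decompose $x = bv + x'$ with $b \in \Z_p$ and $x' \in L'$. Comparing $\Z_p[v]$-components in $px = n + c(v+w)$ yields $pb = c$, while comparing $L'$-components yields $p(x' - bw) = n$. Now $x' - bw$ lies in $L' \subseteq L$ and $p(x' - bw) = n \in N$, so the primitivity of $N$ in $L$ forces $x' - bw \in N$. Therefore $x = b(v+w) + (x' - bw) \in \Z_p[v+w] + N$, showing that $N \perp \Z_p[v+w]$ is a primitive sublattice of $L$.

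The whole argument is essentially coordinatewise bookkeeping once the right decomposition is set up. The one place that needs a little care is the final cancellation of $p$ from the relation $p(x' - bw) = n$: this works precisely because primitivity of $N$ in $L$ can be applied directly to the element $x' - bw \in L' \subseteq L$, and this is the main (though mild) point to get right.
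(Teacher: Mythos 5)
Your proof is correct and follows essentially the same route as the paper's: decompose $L = \Z_p[v]\perp L'$, compare the $v$-coordinate to see that the coefficient of $v+w$ is divisible by $p$, and then invoke the primitivity of $N$ in $L$ to absorb the remaining $L'$-part. The paper phrases this as a proof by contradiction using a primitive vector $\alpha y$ with $\alpha \in p\Z_p$, while you argue directly via the criterion $px \in M \Rightarrow x \in M$; the bookkeeping is the same.
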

\begin{proof}
Assume the contrary and suppose that for some $y \in L$, there exists a nonzero element $\alpha \in p\Z_p$ such that $\alpha y$ is a primitive vector in $N\perp \Z_p[v + w]$.  Then $\alpha y = x + a(v + w)$, where $x \in N$ and $a \in \Z_p$, and either $x$ is primitive in $N$ or $a \in \Z_p^\times$.  Since $\Z_p[v]$ splits $L$, $y$ can be expressed uniquely as $y = z + bv$, where $b \in \Z_p$ and $z$ is in the orthogonal complement of $v$ in $L$.  Thus,
$$b = \frac{a}{\alpha} \quad \mbox{ and } \quad z = \frac{x}{\alpha} + \frac{aw}{\alpha}.$$
The first equality implies that $a$ cannot be a unit.  Therefore, $x$ must be primitive in $N$, which means that $\frac{x}{\alpha}$ is not a vector in $L$ because $N$ is a primitive sublattice of $L$.  This is a contradiction, and hence $N \perp \Z_p[v + w]$ is primitive in $L$.
\end{proof}

\section{Watson Transformations} \label{watson}

For a lattice $L$ and a positive integer $m$, define
$$\Lambda_m(L) = \{x \in L : Q(x + z) \equiv Q(z) \mod m \mbox{ for all } z \in L \}.$$
If $p$ is a prime, then the $\Z_p$-lattice $\Lambda_m(L_p)$ is defined in an analogous manner.   It is clear that $\Lambda_m(L_p) = \Lambda_m(L)_p$.  These sublattices are introduced in \cite{CE04} as a generalization of the transformations used by Watson in \cite{WThesis, W62}.   We will make use of many of their properties that are given in \cite{CE04} and \cite{CO03}.  From now on, we follow the terminology introduced in \cite{CE04} to say that $L$ is {\em normalized} if $\n(L) = 2\Z$.\footnote{In \cite{CO03}, such a lattice is called {\em even primitive}.}  Every similarity class of lattices contains a unique isometry class of normalized lattices.

\begin{lem} \label{watson0}
If $L$ is a normalized lattice and $\s(L) = 2\Z$, then
$$\Lambda_4(L) = \{x \in L : Q(x) \in 4\Z\}.$$
\end{lem}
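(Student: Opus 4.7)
The plan is to prove the equality by directly unpacking the definition of $\Lambda_4(L)$. For any $x, z \in L$ we have the polarization identity
$$Q(x+z) - Q(z) = Q(x) + 2B(x,z),$$
so the condition $x \in \Lambda_4(L)$ is equivalent to requiring
$$Q(x) + 2B(x,z) \equiv 0 \pmod{4}$$
for every $z \in L$.

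The first step is the ``easy'' inclusion $\Lambda_4(L) \subseteq \{x \in L : Q(x) \in 4\Z\}$, which I get by specializing the above congruence to $z = 0$. The second step is the reverse inclusion, and this is where the hypothesis $\s(L) = 2\Z$ enters: by the definition of the scale, $B(x,z) \in 2\Z$ for every $z \in L$, so $2B(x,z) \in 4\Z$ automatically. Combined with the assumption $Q(x) \in 4\Z$, this forces $Q(x) + 2B(x,z) \equiv 0 \pmod{4}$ for all $z \in L$, so $x \in \Lambda_4(L)$.

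The normalization condition $\n(L) = 2\Z$ plays no active role in the argument itself; the content of the lemma is carried entirely by the scale hypothesis, and the normalization is stated only to fix the ambient setup of this section. I do not anticipate any real obstacle here — the lemma reduces to the one-line polarization identity followed by an immediate application of the scale condition. If anything warrants a brief remark, it is that the hypothesis $\s(L) = 2\Z$ cannot be weakened to $\s(L) = \Z$: in that case one could find $x, z$ with $B(x,z)$ odd and $2B(x,z) \equiv 2 \pmod 4$, so $Q(x) \in 4\Z$ would no longer suffice to place $x$ in $\Lambda_4(L)$.
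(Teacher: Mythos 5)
Your argument is correct and is exactly the computation the paper suppresses behind its one-line proof (``This is clear''): polarization gives $Q(x+z)-Q(z)=Q(x)+2B(x,z)$, the inclusion $\subseteq$ follows from $z=0$, and the reverse inclusion follows since $\s(L)=2\Z$ forces $2B(x,z)\in 4\Z$. Your side remarks (that the normalization $\n(L)=2\Z$ is not used, and that $\s(L)=2\Z$ cannot be relaxed to $\s(L)=\Z$) are also accurate.
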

\begin{proof}
This is clear.
\end{proof}

\begin{lem}\label{watson1}
Let $L$ be a normalized lattice such that $Q(L_p) \neq 2\Z_p$ for a prime $p$.  Then
$$\Lambda_{2p}(L_p) = \{x \in L_p : Q(x) \in 2p\Z_p\}.$$
\end{lem}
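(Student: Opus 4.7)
The plan is to prove the two inclusions separately. The inclusion $\Lambda_{2p}(L_p) \subseteq \{x \in L_p : Q(x) \in 2p\Z_p\}$ is immediate by setting $z = 0$ in the defining congruence. For the reverse inclusion, starting from $Q(x) \in 2p\Z_p$ the task reduces to showing $Q(x+z) - Q(z) = Q(x) + 2B(x,z) \in 2p\Z_p$ for all $z \in L_p$; since $Q(x)$ is already in $2p\Z_p$, this amounts to $B(x, L_p) \subseteq p\Z_p$ when $p$ is odd, and $B(x, L_2) \subseteq 2\Z_2$ when $p = 2$. I intend to exploit the hypothesis $Q(L_p) \neq 2\Z_p$ to constrain the Jordan structure of $L_p$ and then verify the bilinear condition directly.

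For odd $p$, normalization gives $\n(L_p) = \Z_p$, forcing the leading Jordan component $L_0$ to be unimodular. I would use the standard fact that any unimodular $\Z_p$-lattice of rank $\geq 3$ splits off a hyperbolic plane, so its $Q$-image is all of $\Z_p$, contradicting the hypothesis; the same applies to $L_0 \cong \Hyper$ of rank $2$. This leaves only $L_0 = \langle u \rangle$ or $L_0 \cong \A = \langle 1, -\Delta\rangle$, with $L_p = L_0 \perp M$ and $\s(M) \subseteq p\Z_p$. Writing $x = x_0 + m$, we have $Q(x_0) = Q(x) - Q(m) \in p\Z_p$; in the $\A$ case, the fact that $\Delta$ is a non-residue mod $p$ forces $x_0 \in p L_0$, and the same conclusion is immediate in the rank-$1$ case. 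The bound $B(x, L_p) \subseteq p\Z_p$ then follows from $\s(M) \subseteq p\Z_p$.

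For $p = 2$, the case $\s(L_2) \subseteq 2\Z_2$ makes $B(x, L_2) \subseteq 2\Z_2$ automatic, so I may assume $\s(L_2) = \Z_2$. Since $\n(L_2) = 2\Z_2$, the leading Jordan component has norm $2\Z_2$ and is therefore a sum of the improper binary blocks $\Hyper$ and $\A$. I would verify the numerical facts $Q(\Hyper) = 2\Z_2$ and $Q(\A \perp \A) = 2\Z_2$ (the latter by expressing each element of $2\Z_2$ as a sum of two norms from the unramified extension $\Q_2(\omega)/\Q_2$) to conclude that only a single $\A$ is admissible, so $L_2 = \A \perp K$ with $\s(K) \subseteq 2\Z_2$. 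A further computation $Q(\A \perp \langle 2u\rangle) = 2\Z_2$ rules out $\n(K) = 2\Z_2$, forcing $\n(K) \subseteq 4\Z_2$. Writing $x = x_A + x_K$ with $x_A = a f_1 + b f_2 \in \A$, the identity $Q(x_A) = 2(a^2 + ab + b^2)$ together with the elementary observation that $a^2 + ab + b^2$ is odd unless both $a$ and $b$ are even then shows that $Q(x) \in 4\Z_2$ forces $x_A \in 2\A$, and the bound $B(x, L_2) \subseteq 2\Z_2$ follows since $\s(K) \subseteq 2\Z_2$.

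I expect the main obstacle to be the $p = 2$ analysis, in which the obstructing Jordan structures $\Hyper$, $\A \perp \A$, and $\A \perp \langle 2u\rangle$ each need a separate $Q$-image computation to be excluded; these rely on careful tracking of $2$-adic valuations and on the surjectivity of the norm on units of $\Z_2[\omega]$.
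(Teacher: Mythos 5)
Your proof is correct and takes the same structural route as the paper: the hypothesis $Q(L_p)\neq 2\Z_p$ forces $L_p=M_p\perp N_p$ with $M_p$ an anisotropic unimodular Jordan component (of rank at most $2$ for odd $p$, and equal to $\A$ when $p=2$ and $\s(L_2)=\Z_2$) and the remaining components of small scale, after which membership in $\Lambda_{2p}(L_p)$ reduces to the condition $Q(x)\in 2p\Z_p$. The only difference is one of packaging: you verify the bilinear-form condition $B(x,L_p)\subseteq p\Z_p$ and the $2$-adic computations ($Q(\Hyper)=Q(\A\perp\A)=Q(\A\perp\langle 2u\rangle)=2\Z_2$) by hand, whereas the paper cites \cite[Lemma 2.1]{CO03} for the identity $\Lambda_{2p}(L_p)=pM_p\perp N_p$; note also that your slightly weaker conclusions $\s(K)\subseteq 2\Z_2$ and $\n(K)\subseteq 4\Z_2$ at $p=2$ are indeed enough for the argument to close.
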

\begin{proof}
We may suppose that $\s(L_2) = \Z_2$ when $p = 2$.  Then $L_p$ has an orthogonal splitting of the form $L_p = M_p \perp N_p$, where $M_p$ is anisotropic unimodular and $\s(N_p) \subseteq 2p\Z_p$.  By \cite[Lemma 2.1]{CO03}, $\Lambda_{2p}(L_p) = pM_p \perp N_p$, which is $\{x \in L_p : Q(x) \in 2p\Z_p\}$ in the present situation.
\end{proof}

\begin{lem} \label{watson2}
Let $p$ be a prime and let $L$ be a normalized lattice such that either $\s(L) = 2\Z$ when $p = 2$ or $Q(L_p) \neq 2\Z_p$.
\begin{enumerate}
\item[(a)] If $W$ is a subspace of $\Q_pL$, then $\Lambda_{2p}(W\cap L_p) = W \cap \Lambda_{2p}(L_p)$.

\item[(b)] If $\sigma: G \longrightarrow L$ is a representation of a lattice $G$ by $L$,  then $\sigma(\Lambda_{2p}(G)) = \Lambda_{2p}(\sigma(G))$.  Furthermore, if $\sigma$ is a primitive representation, then $\sigma(\Lambda_{2p}(G))$ is a primitive sublattice of $\Lambda_{2p}(L)$.
\end{enumerate}
\end{lem}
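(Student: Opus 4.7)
The plan is to exploit the fact that, under the stated hypothesis, Lemmas \ref{watson0} and \ref{watson1} give the pointwise description
$$\Lambda_{2p}(L_p) = \{x \in L_p : Q(x) \in 2p\Z_p\},$$
which replaces the quantifier over $z$ in the definition of $\Lambda_{2p}$ by a single value condition on $Q(x)$. Once this is available, both parts follow almost mechanically.

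For part (a), the inclusion $W \cap \Lambda_{2p}(L_p) \subseteq \Lambda_{2p}(W \cap L_p)$ is immediate from the definition, since restricting the range of the test vector $z$ in the defining congruence only weakens it. Conversely, for $x \in \Lambda_{2p}(W \cap L_p)$, taking $z = 0$ in the defining congruence forces $Q(x) \in 2p\Z_p$, and the pointwise description above then places $x$ in $W \cap \Lambda_{2p}(L_p)$.

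For part (b), the identity $\sigma(\Lambda_{2p}(G)) = \Lambda_{2p}(\sigma(G))$ is tautological because $\Lambda_{2p}$ is intrinsic to the isometry class of the lattice. To establish the primitivity of $\sigma(\Lambda_{2p}(G))$ in $\Lambda_{2p}(L)$, I would argue prime by prime. At any prime $q \neq p$, one checks directly that $\Lambda_{2p}(L_q) = L_q$ and $\Lambda_{2p}(G_q) = G_q$: when $q$ is odd and distinct from $p$, this is because $2p$ is a unit in $\Z_q$; when $q = 2$ and $p$ is odd, it is because normalization of $L$ gives $\n(L_2) \subseteq 2\Z_2$ and the same property is inherited by $G$ through the embedding $\sigma$. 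Hence primitivity is transferred for free at those primes. The real content is the prime $q = p$: applying part (a) with $W = \Q_p\sigma(G_p)$, the primitivity of $\sigma(G_p)$ in $L_p$ yields $W \cap L_p = \sigma(G_p)$, so $\Lambda_{2p}(\sigma(G_p)) = W \cap \Lambda_{2p}(L_p)$. Since $\Lambda_{2p}(\sigma(G_p)) \supseteq 2p\,\sigma(G_p)$, it spans $W$ over $\Q_p$, and therefore
$$\Q_p\Lambda_{2p}(\sigma(G_p)) \cap \Lambda_{2p}(L_p) \;=\; W \cap \Lambda_{2p}(L_p) \;=\; \Lambda_{2p}(\sigma(G_p)),$$
which is the desired primitivity. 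No step is particularly deep; the only thing requiring care is the bookkeeping at the auxiliary primes $q \neq p$ and the observation that part (a) should be applied to the subspace spanned by $\sigma(G_p)$.
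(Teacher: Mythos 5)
Your proof is correct, and for part (a) and the identity $\sigma(\Lambda_{2p}(G))=\Lambda_{2p}(\sigma(G))$ it coincides with the paper's argument: both directions of (a) come from the pointwise description $\Lambda_{2p}(L_p)=\{x\in L_p: Q(x)\in 2p\Z_p\}$ on one side and the trivial restriction of the test vector $z$ on the other. Where you diverge is the primitivity claim in (b). The paper argues globally and directly: it first shows $\sigma(\Lambda_{2p}(G))\subseteq\Lambda_{2p}(L)$ via the pointwise description, then takes $y\in\Lambda_{2p}(L)$ with $ay\in\sigma(\Lambda_{2p}(G))$ for a nonzero integer $a$, deduces $y\in\sigma(G)$ from the primitivity of $\sigma(G)$ in $L$, and concludes $y\in\Lambda_{2p}(\sigma(G))$ because the defining congruence for $y$ over all of $L$ restricts to $\sigma(G)$. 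You instead localize, dispose of the primes $q\neq p$ by noting $\Lambda_{2p}$ acts as the identity there (using $\n(L_2)=2\Z_2$ when $q=2\neq p$), and at $p$ apply part (a) with $W=\Q_p\sigma(G)_p$ together with $W\cap L_p=\sigma(G)_p$. Both routes use the same two ingredients (primitivity of $\sigma(G)$ in $L$ and restriction of the congruence); the paper's version is slightly more economical since it avoids invoking the localization principle for primitivity and the identification $\Lambda_{2p}(L)_q=\Lambda_{2p}(L_q)$, while yours has the merit of reusing part (a) and making explicit the local identity $\Lambda_{2p}(\sigma(G)_p)=\Q_p\sigma(G)_p\cap\Lambda_{2p}(L_p)$, which is exactly the form in which the lemma is later applied in Proposition \ref{watson3}. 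The only steps you leave implicit — that local primitivity at every prime implies global primitivity, and that $2p\,\sigma(G_p)\subseteq\Lambda_{2p}(\sigma(G_p))$ so that the latter spans $W$ — are standard and are used elsewhere in the paper, so there is no gap.
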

\begin{proof}
(a) Let $v$ be a vector in $\Lambda_{2p}(W \cap L_p)$.  Then $v \in W \cap L_p$ and $Q(v) \in 2p\Z_p$.  Therefore $v \in \Lambda_{2p}(L_p)$ by Lemmas \ref{watson0} and \ref{watson1}, and hence $v \in W \cap \Lambda_{2p}(L_p)$.  Conversely, let $w \in W \cap \Lambda_{2p}(L_p)$.  Then $Q(w + z) \equiv Q(z)$ mod $2p$ for all $z \in L_p$.  In particular, $Q(w + z) \equiv Q(z)$ mod $2p$ for all $z \in W\cap L_p$.  Thus, $w \in \Lambda_{2p}(W\cap L_p)$.

(b) For the first assertion, the inclusion $\sigma(\Lambda_{2p}(G)) \subseteq \Lambda_{2p}(\sigma(G))$ is clear because $Q(\sigma(x)) \equiv 0$ mod $2p$ for all $x \in \Lambda_{2p}(G)$.  The other inclusion is a simple consequence of the definitions of an isometry and the Watson transformations $\Lambda_{2p}$.

As for the second assertion, let $x$ be a vector in $\Lambda_{2p}(G)$.  Then $Q(\sigma(x)) = Q(x) \in 2p\Z$ and $\sigma(x) \in L$, which implies that $\sigma(x) \in \Lambda_{2p}(L)$ by Lemmas \ref{watson0} and \ref{watson1}.  Thus, $\sigma(\Lambda_{2p}(G))$ is a sublattice of $\Lambda_{2p}(L)$.

Let $y$ be a vector in $\Lambda_{2p}(L)$ such that $ay \in \sigma(\Lambda_{2p}(G))$ for some nonzero integer $a$.  Since $\sigma(\Lambda_{2p}(G)) \subseteq \sigma(G)$ and the latter is primitive in $L$, it must be the case that $y \in \sigma(G)$.  Additionally, since $y \in \Lambda_{2p}(L)$, the congruence $Q(y + z) \equiv Q(z)$ mod $2p$ holds for all $z \in L$, particularly for all those $z \in \sigma(G)$.  Therefore, $y$ is in $\Lambda_{2p}(\sigma(G)) = \sigma(\Lambda_{2p}(G))$.  This shows that $\sigma(\Lambda_{2p}(G))$ is in fact a primitive sublattice of $\Lambda_{2p}(L)$.
\end{proof}

\begin{prop} \label{watson3}
Let $p$ be a prime and let $L$ be a normalized strictly $n$-regular lattice such that either $\s(L) = 2\Z$ when $p = 2$ or $Q(L_p) \neq 2\Z_p$.  Then $\Lambda_{2p}(L)$ is also strictly $n$-regular.
\end{prop}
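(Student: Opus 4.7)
The plan is to reduce the problem to the strict $n$-regularity of $L$. Given a lattice $M$ of rank $n$ primitively represented by $\gen(\Lambda_{2p}(L))$, I would construct a lifted lattice $G$ on $V := \Q M$ with $M \subseteq G$, such that $G$ is primitively represented by $\gen(L)$ and $\Lambda_{2p}(G) = M$. Then strict $n$-regularity of $L$ yields a primitive representation $\phi\colon G \to L$, and Lemma \ref{watson2}(b) applied to $\phi$ will show that $\phi(M) = \phi(\Lambda_{2p}(G))$ is a primitive sublattice of $\Lambda_{2p}(L)$, proving the proposition.

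The construction of $G$ proceeds locally and glues via \cite[81:14]{OM}. For each prime $q$, fix an isometry $\tau_q\colon V_q \to \Q_q L$ whose restriction to $M_q$ realizes the given local primitive embedding $M_q \hookrightarrow \Lambda_{2p}(L)_q$. The normalization hypothesis on $L$, combined with Lemmas \ref{watson0} and \ref{watson1}, implies that $\Lambda_{2p}(L_q) = L_q$ whenever $q \neq p$: this is trivial when $q\nmid 2p$, while at $q=2$ with $p$ odd it follows from $\n(L) = 2\Z$, which forces $Q(L_2) \subseteq 2\Z_2$. Thus at each $q \neq p$ the local representation is already primitive in $L_q$, and I set $G_q := M_q$. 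At $q = p$, I set $G_p := \tau_p^{-1}(\Q_p \tau_p(M_p) \cap L_p)$, the pullback of the primitive closure of $\tau_p(M_p)$ in $L_p$. Since $G_q = M_q$ for all but one prime, these local data assemble into a global lattice $G$ on $V$ containing $M$, and the $\tau_q$'s exhibit $G$ as primitively represented by $\gen(L)$.

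The critical identity to verify is $\Lambda_{2p}(G) = M$. Outside $p$ both sides equal $M_q$ since $\Lambda_{2p}$ acts trivially on these $\Z_q$-lattices by the same computation used above. At $p$, Lemma \ref{watson2}(a) together with the primitivity of $\tau_p(M_p)$ inside $\Lambda_{2p}(L_p)$ give
\[
\Lambda_{2p}(\tau_p(G_p)) = \Q_p\tau_p(M_p) \cap \Lambda_{2p}(L_p) = \tau_p(M_p),
\]
which transports back through $\tau_p$ to $\Lambda_{2p}(G_p) = M_p$. I expect this identity to be the main conceptual hurdle: it is precisely the compatibility between primitive closure inside $L_p$ and the Watson operator $\Lambda_{2p}$, and the fact that Lemma \ref{watson2}(a) is tailored to deliver exactly this is what makes the primitive-closure construction the right lift. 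Once this step is isolated, the remaining moves — invoking strict $n$-regularity of $L$ to obtain a primitive $\phi\colon G\hookrightarrow L$, and one final appeal to Lemma \ref{watson2}(b) to push the representation into $\Lambda_{2p}(L)$ — are essentially formal.
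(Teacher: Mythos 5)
Your proposal is correct and follows essentially the same route as the paper: lift $M$ to the global lattice $G$ whose $p$-adic completion is the primitive closure $\Q_p\tau_p(M_p)\cap L_p$, check via Lemma \ref{watson2}(a) that $\Lambda_{2p}(G)=M$, invoke strict $n$-regularity of $L$, and push the resulting primitive representation down with Lemma \ref{watson2}(b). Your explicit verification that $\Lambda_{2p}(L_q)=L_q$ for $q\neq p$ (using $\n(L)=2\Z$ at $q=2$) is a point the paper leaves implicit, but it is the same argument.
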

\begin{proof}
Let $N$ be a lattice of rank $n$ which is primitively represented by the genus of $\Lambda_{2p}(L)$.  At the prime $p$, there exists an isometry $\sigma_p$ of $\Q_pL$ and a primitive sublattice $H_p$ of $\Lambda_{2p}(L_p)$ such that $\sigma_p(H_p) = N_p$.  Let $G$ be the lattice on $\Q N$ defined by
$$G_q = \begin{cases}
N_q & \mbox{ if $q \neq p$},\\
\sigma_p(\Q_pH_p \cap L_p) & \mbox{ if $q = p$}.
\end{cases}$$
It is easy to see that $N \subseteq G$, and that $G$ is primitively represented by the genus of $L$.  By the strict $n$-regularity of $L$, we see that $G$ is primitively represented by $L$.  Let $\sigma: G \longrightarrow L$ be a primitive representation.  By Lemma \ref{watson2}(b), $\sigma(\Lambda_{2p}(G))$ is a primitive sublattice of $\Lambda_{2p}(L)$.  We claim that $N = \Lambda_{2p}(G)$, and this will complete the proof of the proposition.

At any prime $q \neq p$, $N_q = G_q = \Lambda_{2p}(G)_q$.  At $p$, we have
$$\Lambda_{2p}(G)_{p}  = \Lambda_{2p}(\sigma(\Q_p H_p \cap L_p)) = \sigma(\Lambda_{2p}(\Q_p H_p \cap L_p)) = \sigma(\Q_p H_p \cap \Lambda_{2p}(L_p)),$$
by Lemma \ref{watson2}.  Since $H_p$ is primitive in $\Lambda_{2p}(L_p)$, $\Q_p H_p \cap \Lambda_{2p}(L_p) = H_p$.  Thus, $N_p = \sigma(H_p) = \Lambda_{2p}(G)_p$.  As a result, $N = \Lambda_{2p}(G)$ as claimed.
\end{proof}

For any positive integer $m$ and positive rational number $r$, let $\Lambda_m^{(r)}(L)$ be the lattice obtained by scaling the quadratic map on $\Lambda_m(L)$ by $r$.   Given $m$, there is always a unique rational number $r$ such that $\Lambda_m^{(r)}(L)$ is normalized.   When $m = 2p$ and $L$ is normalized, then $2p^2\Z \subseteq \n(\Lambda_{2p}(L)) \subseteq 2p\Z$ and it follows that the rational number $r$ is either $\frac{1}{p}$ or $\frac{1}{p^2}$.   The Jordan decompositions of $L_p$ and $\Lambda_m^{(r)}(L)_p$ are certainly related, and the changes are recorded in \cite[Lemmas 2.1, 2.3, 2.4]{CO03}\footnote{There is a minor mistake in \cite[Lemma 2.3(2)]{CO03}.  When $\frac{dM}{4} \equiv 5$ mod 8, $\lambda_4(L)_2$ should be $M_2^3 \perp N_2^{\frac{1}{2}}$.  This affects neither the results in \cite{CO03} nor the conclusion we draw here. }.

Let $\br = \{r_1, \ldots, r_n\}$ be a finite sequence of rational numbers.  The composition of transformations $\Lambda_{2p}^{(r_n)}\circ \cdots \circ \Lambda_{2p}^{(r_1)}$ is abbreviated as $\Lambda_{2p}^{(\br)}$.  We say that $\br$ is admissible to $L$ and $p$ if for every $i$, $\Lambda_{2p}^{(r_i)}\circ \cdots \circ \Lambda_{2p}^{(r_1)}(L)$ is normalized.  Note that the entries of a sequence admissible to $L$ and $p$ are either $\frac{1}{p}$ or $\frac{1}{p^2}$.

The following corollary can be proved in the same way as \cite[Theorem 2.5]{CO03}, using Proposition \ref{watson3} together with \cite[Lemmas 2.1, 2.3, 2.4]{CO03} (see also \cite[Remark 2.7]{CO03}).

\begin{cor} \label{watson4}
Let $L$ be a normalized strictly $n$-regular lattice of rank $\geq 4$.  For any prime $p$, there exists a sequence $\br$ admissible to $L$ and $p$ such that the normalized lattice $\Lambda_{2p}^{(\br)}(L)$ is  strictly $n$-regular lattice with the properties that $Q(\Lambda_{2p}^{(\br)}(L)_p) = 2\Z_p$ and $\Lambda_{2p}^{(\br)}(L)_q$ is isometric to $L_q$ up to a scaling factor for all $q \neq p$.  If, in addition, $L_p$ is isotropic, we can make sure that $\Lambda_{2p}^{(\br)}(L)_p$ is split by $\Hyper$.
\end{cor}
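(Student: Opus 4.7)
The plan is a descent argument patterned after the proof of \cite[Theorem 2.5]{CO03}, in which Proposition \ref{watson3} plays the role that the corresponding $n$-regularity-preserving result plays in \cite{CO03}. The sequence $\br$ is to be constructed inductively, one entry at a time.

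First, set $L_0 := L$. If $Q((L_0)_p) = 2\Z_p$ already, then I take $\br$ to be empty and there is nothing to do. Otherwise, since $L_0$ is normalized, the hypothesis of Proposition \ref{watson3} is met: for $p$ odd this is the hypothesis ``$Q(L_p) \neq 2\Z_p$'' directly, while for $p = 2$ the same failure forces $\s(L) = 2\Z$ (else $L_2$ would contain a unimodular split component and $Q(L_2) = 2\Z_2$). Proposition \ref{watson3} then yields that $\Lambda_{2p}(L_0)$ is strictly $n$-regular; its scale and norm at $p$ have grown, so there is a unique $r_1 \in \{1/p, 1/p^2\}$ with $L_1 := \Lambda_{2p}^{(r_1)}(L_0)$ normalized. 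Iterating the same procedure at $L_1$ (as long as $Q((L_i)_p) \neq 2\Z_p$) produces a sequence $\br = (r_1, r_2, \ldots)$ admissible to $L$ and $p$, and by Proposition \ref{watson3} every $L_i$ is strictly $n$-regular.

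Next I would verify that the iteration terminates. Using \cite[Lemmas 2.1, 2.3, 2.4]{CO03}, one tracks how the $p$-adic Jordan decomposition of $L_i$ changes at each step: the effect is to contract a high-scale Jordan component back to a low-scale one and thereby strictly reduce a nonnegative integer invariant, such as the number of Jordan components of scale larger than $\Z_p$. Hence after finitely many steps one reaches some $L_N$ with $Q((L_N)_p) = 2\Z_p$. At any prime $q \neq p$, a direct computation using the definition of $\Lambda_{2p}$ shows that $\Lambda_{2p}(L)_q = L_q$ (since $2p$ is a unit in $\Z_q$ when both $p$ and $q$ are odd, and since $\n(L_2) = 2\Z_2$ handles the mixed-parity cases), while each rescaling by $r_i$ is by a $q$-adic unit, so $(L_N)_q$ is isometric to $L_q$ up to similarity, as required.

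For the isotropic addendum I would follow \cite[Remark 2.7]{CO03}, which observes that there is enough freedom in the choice of admissible sequence to prescribe features of the terminal $p$-adic Jordan decomposition. When $L_p$ is isotropic this freedom can be used to guarantee that $\Lambda_{2p}^{(\br)}(L)_p$ splits off $\Hyper$ as a unimodular orthogonal summand. The chief technical burden throughout is the Jordan-theoretic bookkeeping at $p = 2$, where the choice between $r_i = 1/2$ and $r_i = 1/4$ depends delicately on the shape of the current dyadic Jordan decomposition; but this combinatorics is precisely what the cited lemmas of \cite{CO03} settle, leaving the corollary to follow by feeding their conclusions into Proposition \ref{watson3}.
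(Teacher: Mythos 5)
Your proposal is correct and takes essentially the same route as the paper, which proves the corollary exactly as you do: by running the descent of \cite[Theorem 2.5]{CO03}, with Proposition \ref{watson3} supplying the preservation of strict $n$-regularity at each step and \cite[Lemmas 2.1, 2.3, 2.4]{CO03} together with \cite[Remark 2.7]{CO03} supplying the Jordan-decomposition bookkeeping, the termination, and the isotropic refinement. One harmless parenthetical slip: a normalized lattice with $Q(L_2)\neq 2\Z_2$ need not have $\s(L)=2\Z$ (consider $L_2\cong \A\perp N_2$ with $\n(N_2)\subseteq 8\Z_2$), but this does not matter because $Q(L_p)\neq 2\Z_p$ is itself one of the alternatives in the hypothesis of Proposition \ref{watson3}.
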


\begin{rmk} \label{sublatticeminima}
Since $2pL \subseteq \Lambda_{2p}(L) \subseteq L$, we have
$$\mu_i(L) \leq \mu_i(\Lambda_{2p}(L)) \leq 4p^2 \mu_i(L)$$
for any $i$.  Thus, if $\ell$ is the length of a sequence $\mathbf r$ which is admissible to $L$ and $p$, then there are constants $\mathfrak c_1$ and $\mathfrak c_2$, depending only on $p$ and $\ell$, such that
$$\mathfrak c_1 \mu_i(L) \leq \mu_i(\Lambda_{2p}^{(\mathbf r)}(L)) \leq \mathfrak c_2 \mu_i(L).$$
\end{rmk}

\begin{rmk}\label{sublattice}
Suppose that $\br$ is sequence of length $\ell$ admissible to $L$ and a prime $p$.  Lemma \ref{watson2}(b) shows that if $M$ is a (resp. primitive) sublattice of $L$, then $\Lambda_{2p}^{(\br)}(M)$ is a (resp. primitive) sublattice of $\Lambda_{2p}^{(\br)}(L)$.  Furthermore,  there is a constant $\mathfrak c_3$, depending only on $p$ and $\ell$, such that $d(\Lambda_{2p}^{(\mathbf r)}(M)) \leq \mathfrak c_3 d(M)$.
\end{rmk}

\section{Bounding the first $n + 3$ minima}

We first prove a result bounding the first $n + 3$ successive minima of a normalized $n$-regular lattice of rank at least $n + 3$.  The case when $n = 3$ is fairly straightforward, and the proof for the $n = 2$ case is a modification of that for \cite[Theorem 1.1 ($n = 2$)]{CO03}.   We hope the detail provided here will help clarify some of the arguments used in the proof of \cite[Theorem 1.1 ($n = 2$)]{CO03} and make it more transparent to the readers.

\begin{prop} \label{mun+3}
Let $n \geq 2$ be an integer. If $L$ is a normalized $n$-regular lattice of rank $\geq n + 3$, then $\mu_{n+3}(L)$ is bounded above by an absolute constant.
\end{prop}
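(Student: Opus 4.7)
The plan is to split into the cases $n \geq 3$ and $n = 2$, using the lemmas of Section \ref{prelim} to bootstrap the successive minima of $L$. Throughout the argument I will use the standard fact that an $n$-regular lattice of rank at least $n+1$ is $j$-regular for every $1 \leq j \leq n$ (one enlarges a $j$-ary sublattice represented by $\gen(L)$ to an $n$-ary one using local representations in the orthogonal complement, then invokes $n$-regularity).

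For $n \geq 3$ I would proceed one minimum at a time. Since $L$ is normalized, $\mu_1(L) = 2$, and Lemma \ref{CEOlemma} bounds $\mu_3(L)$ absolutely. Inductively, suppose $\mu_k(L)$ is absolutely bounded for some $3 \leq k \leq n+1$; then the $k$-section $M_k$ of $L$ satisfies $d(M_k) \leq \mu_1(L) \cdots \mu_k(L)$, so $M_k$ ranges over a finite set of isometry classes. Since $L$ is $(k-1)$-regular of rank greater than $k$, Lemma \ref{COlemma}(b) bounds $\mu_{k+1}(L)$ in terms of $M_k$, hence absolutely. Iterating up to $k = n+1$ gives an absolute bound on $\mu_{n+2}(L)$, so the $(n+2)$-section of $L$ also lies in a finite set. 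Because $n+2 \geq 5$ and $L$ is $n$-regular of rank exceeding $n+2$, Lemma \ref{COlemma}(c) applied to that section yields the desired bound on $\mu_{n+3}(L)$.

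For $n = 2$ the same bootstrap provides absolute bounds on $\mu_1(L), \ldots, \mu_4(L)$ but stalls at $\mu_5(L)$: Lemma \ref{COlemma}(b) with $k = 4$ would demand $3$-regularity, and Lemma \ref{COlemma}(c) demands $k \geq 5$. Instead I would adapt the strategy of \cite[Theorem 1.1, $n=2$]{CO03}. Let $M$ denote the $4$-section of $L$, whose isometry class already belongs to a finite set. The aim is to produce a binary lattice $K$ with absolutely bounded successive minima such that $K$ is primitively represented by $\gen(L)$ but is not represented by $M$; the $2$-regularity of $L$ then yields a representation $K \hookrightarrow L$ whose image escapes $\Q M$, and Lemma \ref{boundlemma} delivers $\mu_5(L) \leq \max\{\mu_2(K), \mu_4(M)\}$.

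The main obstacle is the production of this binary lattice $K$ in the $n = 2$ case, and the plan is a prime-by-prime analysis. At primes $p \nmid 2\,d(M)$ the orthogonal complement of $M_p$ in $L_p$ is unimodular and of rank at least one, and so can be paired with suitable vectors in $M_p$ to realize any prescribed binary piece. At the finitely many bad primes a Jordan-decomposition case analysis will produce a local $K_p$ that is primitively represented by $L_p$ through the complement of $M_p$ but not by $M_p$; this is where Lemma \ref{localprim} and the anisotropy input of Lemmas \ref{simple1} and \ref{simple2} would be brought to bear. These local pieces are assembled into a global $K$ via \cite[81:14]{OM}, with the minima of $K$ controlled by the finite data at the bad primes. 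This local-to-global construction, rather than any single global invocation of regularity, is the delicate heart of the argument.
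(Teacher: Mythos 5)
For $n \geq 3$ your bootstrap is exactly the paper's argument: $\mu_3$ from Lemma \ref{CEOlemma}, then Lemma \ref{COlemma}(b) up through $\mu_{n+2}$, then Lemma \ref{COlemma}(c) for $\mu_{n+3}$ (indeed your bookkeeping of which part applies at which rank is, if anything, more careful than the paper's). The problem is the $n=2$ case, where you have correctly identified the target --- a binary $K$ of bounded minima represented by $\gen(L)$ but not by the quaternary section $M$, fed into Lemma \ref{boundlemma} --- but the sketch of how to build $K$ contains a genuine gap, and the specific mechanisms you name are not the ones that can work.

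First, your claim that at primes $p \nmid 2\,d(M)$ the orthogonal complement of $M_p$ in $L_p$ is unimodular is false: $M_p$ unimodular forces $L_p = M_p \perp M_p^\perp$, but nothing a priori controls $\mathfrak n(M_p^\perp)$. Bounding the generator $a$ of $\n(M^\perp)$ (equivalently, the fifth Jordan exponent $\gamma_{5,q}$ of $L_q$) is precisely the hard part of the proof, and it cannot be done by a local Jordan analysis at ``finitely many bad primes,'' because the primes dividing $a$ and their exponents are exactly what is not yet known to be bounded. The paper gets this bound globally: when $d(M)$ is a square, Lemma \ref{simple1} produces an anisotropic prime for $M$; if there are two such primes one builds a binary lattice that is hyperbolic at one of them (hence missed by $M$) yet represented by $\gen(L)$, and Lemma \ref{COlemma}(a) converts the resulting bound on $\mu_5(L)$ into a bound on $\ord_p(a)$ at the other; when only one anisotropic prime survives, a further test lattice $\tilde N$ plus Watson transformations (Corollary \ref{watson4}) are needed to force $L_q \cong \Hyper \perp \cdots$ away from that prime before the final $N$ can be written down. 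None of this global input appears in your plan. Second, your reliance on primitivity --- Lemma \ref{localprim} and a $K_p$ that is ``primitively represented by $L_p$ but not by $M_p$'' --- does not mesh with the hypothesis: Proposition \ref{mun+3} assumes only $2$-regularity, which upgrades ``represented by $\gen(L)$'' to ``represented by $L$'' but gives no primitive representation, so the second assertion of Lemma \ref{boundlemma} is unavailable. The paper's test lattices instead fail to be represented by $M$ \emph{at all}, because they are isotropic over $\Q_p$ at a prime where $M_p$ is anisotropic; that anisotropy argument (not Lemma \ref{simple2} or Lemma \ref{localprim}, which belong to the later bound on $\mu_{n+4}$) is the missing idea.
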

\begin{proof}
Let $L$ be a normalized $n$-regular lattice of rank $m \geq n + 3$.  In what follows, a numerical quantity is said to be bounded if its absolute value is bounded above by an absolute constant.

By Lemma \ref{strict}, $L$ is a regular lattice of rank $\geq 5$.  It follows from Lemma \ref{CEOlemma} that the first three successive minima of $L$ are bounded.   By Lemma \ref{COlemma}(b), $\mu_4(L)$ is also bounded.   If $n \geq 3$, then by Lemma \ref{COlemma}(c) $\mu_5(L), \ldots, \mu_{n+3}(L)$ are also bounded.

Henceforth, $L$ is always a normalized $2$-regular lattice of rank $m\geq 5$.  Let $M$ be a primitive quaternary sublattice of $L$ whose discriminant is bounded (for example, a $4\times 4$ section of $L$).  We will use $M$ to bound $\mu_5(L)$.  In below, we let $a$ be the positive generator of $\mathfrak n(M^\perp)$.  Here $M^\perp$ is the orthogonal complement of $M$ in $L$.

When $d(M)$ is a not a square, the argument in \cite[Page 2392]{CO03} carries over to here in verbatim.  Thus, we assume from now on that {\em $d(M)$ is a square}. For any prime $q$ and $\Z_q$-lattice $G$, we follow \cite{CO03} to use the notation $\last(G)$ to denote the norm ideal of the last component in a Jordan decomposition of $G$.

Suppose that there exist two primes $\ell \neq p$ such that $M_p$ and $M_\ell$ are anisotropic.  We claim that $\mu_5(L)$ is bounded under this circumstance. It is necessary that both $p$ and $\ell$ are divisors of $d(M)$ and hence they are bounded.  Fix a full rank sublattice $M' \cong \langle \alpha, \beta \rangle \perp B$ of $M$.  Write $\alpha = \ell^{\ord_\ell(\alpha)}\alpha'$ and $\beta = \ell^{\ord_\ell(\beta)}\beta'$ such that both $\alpha'$ and $\beta'$ are units in $\Z_\ell$.  Choose the smallest positive integer $b$ which satisfies:
\begin{enumerate}
\item[(1)] $b \equiv \ord_\ell(\alpha\beta)$ mod 2, and

\item[(2)] $b \geq \max\{ \ord_\ell(\last(M_\ell)), \ord_\ell(a) \}$.
\end{enumerate}
It is clear that $b - \ord_\ell(a)$ is bounded.  Next, let us choose a prime $s$ such that
\begin{enumerate}
\item[(3)] $-s \alpha'\beta' \in (\Z_\ell^\times)^2$, and

\item[(4)] if $q \neq \ell$ and $q \mid d(M')$, then $s\ell^b \in (\Z_q^\times)^2$.
\end{enumerate}
The choice of $s$ depends only on $M$ and hence $s$ can be chosen to be bounded.  Let $K$ be the binary lattice $\langle \alpha, \ell^{b + \delta}\beta s \rangle$, where $\delta = 4$ if $\ell = 2$ and $\delta = 0$ otherwise.  Then $\Q_\ell K \cong \Hyper$.  As a result, $\Q_\ell K$ is not represented by $\Q_\ell M$ and $K$ is not represented by $M$.

We claim that $K$ is represented by $L$.  It suffices to show that $K_q$ is represented by $L_q$ for all primes $q$ because $L$ is 2-regular.  By \cite[Theorems 1 and 3]{OM2}, $K_\ell$ is represented by $M_\ell \perp M_\ell^\perp \subseteq L_\ell$. Suppose that $q$ is not $\ell$.  If $q \nmid d(M')$, then $q\nmid d(M)$ and $M_q$ is unimodular.  In this case $M_q\cong \Hyper \perp \Hyper$, which certainly represents $K_q$.   Finally, if $q \mid d(M')$, then $s\ell^{b + \delta}$ is a square unit in $\Z_q$ and $K_q \cong \langle \alpha, \beta \rangle$ which is represented by $M_q$.

By Lemma \ref{COlemma}(a) and Lemma \ref{boundlemma}, we have
$$p^{\ord_p(a)} \ell^{\ord_\ell(a)} \leq a \leq d(M)^2\, \mu_4(M)\, \ell^{b + \delta}\alpha\beta s.$$
This shows that $p^{\ord_p(a)} \leq d(M)^2\, \mu_4(M)\, \ell^{b - \ord_\ell(a) + \delta}\alpha\beta s$.  Thus, $\ord_p(a)$ is bounded.  Now, choose an integer $\tilde{b}$ and a prime $\tilde{s}$ as before, satisfying conditions (1)--(4) with $p$ in place of $\ell$, and let $\tilde{K} = \langle \alpha, p^{\tilde{b} + \tilde{\delta}}\beta \tilde{s} \rangle$ where $\tilde{\delta} = 4$ if $p = 2$ and $\tilde{\delta} = 0$ otherwise.  Note that $\tilde{b}$ and $\tilde{s}$ are now bounded, and  $\tilde{K}$ is represented by $L$ but not by $M$.  Therefore, by Lemma \ref{boundlemma},
$$\mu_5(L) \leq \max\{\alpha, p^{\tilde{b} + \tilde{\delta}}\beta \tilde{s}, \mu_4(M) \},$$
and hence $\mu_5(L)$ is bounded as claimed.

By Lemma \ref{simple1}, there must be a prime $p$ at which $M_p$ is anisotropic, and now we may assume that $M_q$ is isotropic for all $q \neq p$.  We can apply the above argument and conclude that $q^{\ord_q(a)}$ is bounded.  Let
$$\mathcal B = \{ q \mbox{ prime } :   \mbox{ either } q \mid dM \mbox{ or } \ord_q(a) > 0\}.$$
Then all the primes in $\mathcal B$ are bounded and $p$ is in $\mathcal B$.  If $q \not \in \mathcal B$,  then $L_q$ is split by $\Hyper\perp \Hyper$ since in this case the rank of the unimodular component of a Jordan decomposition of $L_q$ is at least 5.

For the sake of convenience, we introduce the following notation.  Let $q$ be a prime  and $v_{1,q}, \ldots, v_{m,q}$ be a basis of a Jordan decomposition of $L_q$.  For $1 \leq i \leq m$, let $\gamma_{i,q}$ be the $q$-adic order of the norm ideal of the Jordan component that contains $v_{i,q}$.
In particular, $\gamma_{1, q} \leq \cdots \leq \gamma_{m, q}$.

Suppose that $q \neq p$ and $L_q$ is not split by $\Hyper$.  Then $q \in \mathcal B$, and $\gamma_{5, q}$ is also bounded because $q^{\ord_q(a)}$ is bounded.  By Corollary \ref{watson4}, there exists a sequence of rational numbers $\br$ of bounded length, admissible to $L$ and $q$,  such that $\Lambda_{2q}^{(\br)}(L)$ is split by $\Hyper$.  The sublattice $\Lambda_{2q}^{(\br)}(M)$ is primitive in $\Lambda_{2q}^{(\br)}(L)$ by Lemma \ref{watson2}(b).   The prime $p$ is the only prime for which $\Lambda_{2q}^{(\br)}(M)_p$ is anisotropic, and the discriminant of  $\Lambda_{2q}^{(\br)}(M)$ is also a square.   Let $a^{(\br)}$ be the positive generator of the norm ideal of the orthogonal complement of $\Lambda_{2q}^{(\br)}(M)$ in $\Lambda_{2q}^{(\br)}(L)$.  The set
$$\mathcal B^{(\br)}: = \{ \ell \mbox{ prime }: \ell\mid d \Lambda_{2q}^{(\br)}(M) \mbox{ or } \ord_\ell(a^{(\br)}) > 0\}$$
is either $\mathcal B$ or $\mathcal B \setminus \{q\}$.  By Remarks \ref{sublatticeminima} and \ref{sublattice}, we can replace $L$, $M$, $a$, and $\mathcal B$ by $\Lambda_{2q}^{(\br)}(L)$, $\Lambda_{2q}^{(\br)}(M)$, $a^{(\br)}$, and $\mathcal B^{(\br)}$, respectively, in our discussion.  We perform this procedure for every such prime $q$, after which we may assume that $L_q$ is split by $\Hyper$ for every $q \neq p$.

Suppose that $\gamma_{5,p} > \ord_p(\last(M_p)) + 4$.  Then, by \cite[Theorems 1 and 3]{OM2}, $M_p$ must be represented by $\Z_p[v_{1,p}, \ldots, v_{4,p}]$ which is the sum of a certain number of the Jordan components.  Then $\Z_p[v_{5,p}, \ldots,  v_{m,p}]$ is the the sum of the rest of the Jordan components of $L_p$.  By applying a suitable $\Lambda_{2p}^{(\br)}$ with an $\br$ of bounded length,  we may further assume that
$$L_p \cong \A\perp \A^p \perp \Z_p[v_{5,p}, \ldots,  v_{m,p}],$$
where $\A^p$ is the scaling of $\A$ by $p$.   Then $M^\perp_p$ is represented by $\Z_p[v_{5,p}, \ldots,  v_{m,p}]$, and hence $\ord_p(a) \geq \gamma_{5,p}$.  Let $\eta$ be the smallest positive even integer greater than or equal to $\ord_p(a)$, and let $\tilde{r}$ and $\tilde{t}$ be two primes satisfying the following conditions:
\begin{enumerate}
\item[(5)] $\tilde{r} \in -(\Z_q^\times)^2$ for all $q \in \mathcal B$,

\item[(6)] $\tilde{t} > 2p^{\eta - \ord_p(a)}\tilde{r}\,d(M)^2$ and $\tilde{t} \in (\Z_q^\times)^2$ for all $q \mid 2\tilde{r}$.
\end{enumerate}
It is clear  that $\tilde{r}$ and $\tilde{t}$ can be chosen to be bounded.  Let $\tilde{c} (< \tilde{t})$ and $\tilde{e}$ be integers such that $\tilde{t}\tilde{e} = \tilde{c}^2 + p^\eta\tilde{r}$, and consider the binary lattice
$$\tilde{N}: = \begin{pmatrix} 2\tilde{t} & 2\tilde{c} \\ 2\tilde{c} & 2\tilde{e} \end{pmatrix}.$$
It is easy to see that $\tilde{N}_q$ is represented by $L_q$ for all $q \neq p$.  By \cite[Theorems 1 and 3]{OM2}, $\tilde{N}_p$ is also represented by $L_p$.  It follows from the 2-regularity of $L$ that $\tilde{N}$ is represented by $L$.  However, $\tilde{N}$ is not represented by $M$ since $\tilde{N}_p$ is isotropic but $M_p$ is not.  Thus, by Lemma \ref{COlemma}(a) and Lemma \ref{boundlemma},
$$p^{\ord_p(a)} \leq a \leq d(M)^2 \, \max\{ 2\tilde{t}, 2\tilde{e}, \mu_4(M) \}.$$
If $2\tilde{e} \leq \max\{2\tilde{t}, \mu_4(M)\}$, then $\ord_p(a)$ is bounded.  Otherwise, $p^{\ord_p(a)} \leq 2\tilde{e}\,(dM)^2$.  Since $\tilde{t}\tilde{e} < \tilde{t}^2 + p^\eta \tilde{r}$, we have
$$p^{\ord_p(a)} < \frac{2\tilde{t}^2\,d(M)^2}{\tilde{t} - 2p^{\eta - \ord_p(a)}\tilde{r}\,d(M)^2},$$
and hence $\ord_p(a)$ is once again bounded.  In any event, we have shown that either $\gamma_{5,p} \leq \ord_p(\last(M_p)) + 4$ or else $\ord_p(a)$ is bounded and $\gamma_{5,p} \leq \ord_p(a)$.  Thus, $\gamma_{5,p}$ is bounded.

Now, by Corollary \ref{watson4}, we may apply the $\Lambda_{2p}^{(\mathbf r)}$ to $L$ with a sequence $\mathbf r$ of bounded length admissible to $L$ and $p$ and further assume that $L_p$ is split by $\Hyper$.  Let $r$ be a prime  such that $r \in -(\Z_q^\times)^2$ for all primes $q \in \mathcal B$.  This $r$ can be chosen to be bounded.  Choose another bounded prime $t$ such that $t \nmid d(M)$ and $t \in (\Z_q^\times)^2$ for all primes $q \mid 2r$.  Then $-r$ is a square modulo $t$, and there exist bounded integers $c (< t)$ and $e$ such that $te = c^2 + r$.  Let
$$N : = \begin{pmatrix} 2t & 2c \\ 2 c & 2e \end{pmatrix}.$$
Then $N_q$ is represented by $\Hyper$ for all $q \in \mathcal B$.  For $q \not \in \mathcal B$, $L_q$ is split by $\Hyper\perp \Hyper$ which represents any (even) binary $\Z_q$-lattice.  Thus, $N$ is represented by the genus of $L$, and hence by $L$ itself because of the 2-regularity of $L$.  However, $N_p$ is isotropic, meaning that $N_p$ is not represented by $M_p$.  Therefore, $N$ is not represented by $M$.  Consequently, by Lemma \ref{boundlemma}
$$\mu_5(L) \leq \max\{2t, 2e, \mu_4(M) \},$$
meaning that $\mu_5(L)$ is bounded.
\end{proof}

\section{Proof of Main Theorem ($n \geq 3$)}

In this section, we will present the proof of Theorem \ref{mainthm} when $n \geq 3$.  The case $n = 2$ will be discussed in the next section.

\begin{prop} \label{boundmin}
Let $L$ be a strictly $(n-3)$-regular lattice of rank $> n$.  If $L$ primitively represents a lattice $M$ of rank $n \geq 6$, then $\mu_{n+1}(L)$ is bounded above by a constant depending only on $M$.
\end{prop}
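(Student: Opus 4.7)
The plan is to invoke Lemma \ref{boundlemma} with $M$ taken as the given primitive rank-$n$ sublattice of $L$ and with a carefully constructed auxiliary lattice $K$ of rank $n-3$. If we can exhibit such a $K$ that is primitively represented by $L$ but not primitively represented by $M$, and for which $\mu_{n-3}(K)$ is bounded above by a constant depending only on $M$, then Lemma \ref{boundlemma} will deliver
$$\mu_{n+1}(L) \leq C\,\max\{\mu_{n-3}(K),\,\mu_n(M)\},$$
which is the required bound (here $C$ depends only on $n$, and $n$ is fixed).

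First I would identify $M$ with its image under the given primitive representation into $L$ and let $K_0$ be the $(n-3)\times(n-3)$ section of $M$, spanned by the first $n-3$ vectors of a Minkowski reduced basis of $M$. Then $K_0$ is primitive in $M$ (hence in $L$), and its successive minima are bounded by constant multiples of those of $M$. Since $K_0$ itself is primitively represented by $M$, it cannot be used as the $K$ of the preceding paragraph; it serves only as a template. Next I would perturb $K_0$ at a single prime $p$, chosen from the divisors of $2\,d(M)$: produce a $\Z_p$-lattice $K_p$ of rank $n-3$ which admits a primitive embedding into $L_p$ but \emph{no} primitive embedding into $M_p$, and patch it to $K_{0,q}$ at each $q\neq p$ by \cite[81:14]{OM} to obtain a global lattice $K$. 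Because $M_p$ sits primitively inside $L_p$ while $\mathrm{rank}(L_p)>\mathrm{rank}(M_p)$, the Jordan decomposition of $L_p$ has summands not present in that of $M_p$; the plan is to let $K_p$ absorb content from those extra summands, which both places it primitively in $L_p$ and destroys any primitive embedding into $M_p$. The resulting $K$ is then primitively represented by $\gen(L)$, and since $L$ is strictly $(n-3)$-regular (note $n-3\geq 3$ because $n\geq 6$), $K$ is primitively represented by $L$, so Lemma \ref{boundlemma} applies.

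The main obstacle is the local construction of $K_p$: one must rule out \emph{every} primitive embedding of $K_p$ into $M_p$, which is genuinely delicate when $p=2$ or when $\mathrm{rank}(L)=n+1$ (so the orthogonal complement of $M$ in $L$ has rank only $1$, giving very little room to work with). In such tight cases I would first apply a Watson transformation $\Lambda_{2p}^{(\mathbf r)}$ to $L$ at the prime $p$ using Corollary \ref{watson4} to simplify the Jordan decomposition of $L_p$ (for example to split off a hyperbolic plane at $p$). Proposition \ref{watson3} ensures that strict $(n-3)$-regularity is preserved, and Remarks \ref{sublatticeminima} and \ref{sublattice} guarantee that the successive minima of $L$ and the minima and discriminant of $K_0$ change only by factors depending on $p$ and the length of $\mathbf r$, so bounds on the transformed lattice pull back to bounds on the original. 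Once this normalization is in place, the extra hyperbolic content in $L_p$ supplies the degree of freedom needed to build $K_p$ explicitly so that it cannot fit primitively into $M_p$.
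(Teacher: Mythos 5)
Your top-level skeleton matches the paper's: identify $M$ with its image in $L$, build an auxiliary lattice $K$ of rank $n-3$ with bounded minima that is primitively represented by $\gen(L)$ (hence by $L$, using strict $(n-3)$-regularity) but not primitively by $M$, and feed the pair into Lemma \ref{boundlemma}. But the entire mathematical content of the proposition lies in the local construction of $K$, which you explicitly defer ("the main obstacle") and for which your stated plan points in the wrong direction. You propose to work at a prime $p \mid 2\,d(M)$ and to exploit "extra Jordan summands" of $L_p$ not present in $M_p$. That premise is false in general: if, say, $L_p$ is unimodular of rank $n+1$ and $M_p$ is unimodular of rank $n$, the Jordan decomposition of $L_p$ has no summand absent from $M_p$ --- there is just one more dimension in the same component --- and at primes dividing $d(M)$ the structure of $M_p$ is essentially uncontrolled, so there is no uniform way to rule out \emph{every} primitive embedding of a candidate $K_p$ into $M_p$. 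Merely having $K_p$ "absorb content" from the orthogonal complement of $M_p$ does not produce such an obstruction; one needs an anisotropy argument.

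The paper does the opposite: it picks $p \nmid 2\,d(M)$ (with the extra condition that $-d(M)$ is a nonresidue mod $p$ when $n=6$), so that $M_p$ is unimodular and contains a primitive corank-$4$ sublattice $N_p \cong \langle -p, p\Delta\rangle \perp T$ whose orthogonal complement in $M_p$ is the anisotropic quaternary lattice $\langle 1,-\Delta,p,-p\Delta\rangle$. Anisotropy (Lemma \ref{simple2}) forbids that complement from primitively representing anything in $p^2\Z_p$, so $N_p\perp\langle p^{\delta}a\rangle$ with $\ord_p(a)\ge 2$ cannot sit primitively in $M_p$; meanwhile the extra rank of $L$ supplies a vector $v$ orthogonal to $M_p$ which, combined with vectors of the complement via Lemma \ref{localprim}, makes $N_p\perp\langle p^{\delta}a\rangle$ primitive in $L_p$. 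That is the idea your proposal is missing. Your fallback of applying $\Lambda_{2p}^{(\mathbf r)}$ also does not repair this: besides not solving the local problem, after transforming you would be bounding $\mu_{n+1}$ of $\Lambda_{2p}^{(\mathbf r)}(L)$ relative to $\Lambda_{2p}^{(\mathbf r)}(M)$, and while Remarks \ref{sublatticeminima} and \ref{sublattice} let the bound pull back, you would still need the transformed $M$ to carry the requisite obstruction, which you have not constructed. As written, the proposal is a plan with the decisive step unproved.
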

\begin{proof}
Let $p$ be a prime such that $p \nmid 2\, d(M)$.  When $n = 6$, we impose the additional condition that $-d(M)$ is a quadratic nonresidue modulo $p$.   Then
$$M_p \cong \Hyper \perp \Hyper \perp \A \perp T,$$
where $T$ is a unimodular $\Z_p$-lattice of rank $n - 6$ if $n > 6$, and 0 otherwise.  Thus, $M_p$ contains a primitive sublattice isometric to $\langle -p, p\Delta \rangle \perp T$.  Let $\{x_{1, p}, \ldots, x_{n-4, p}\}$ be an orthogonal basis of this sublattice.   Choose vectors $x_1, \ldots, x_{n-4}$ in $M$ such that $x_i$ approximates $x_{i, p}$ for $1 \leq i \leq n - 4$, and let $N: = \Z[x_1, \ldots, x_{n-4}]$.  The approximation is made fine enough that $N_p$ is a primitive sublattice of $M_p$ and $N_p \cong \Z_p[x_{1, p}, \ldots, x_{n-4, p}]$.   Note that $N$ depends only on $M$, and hence $d(N)$ is bounded above by a constant depending only on $M$.  Let $G$ be the orthogonal complement of $N$ in $M$.

Since $M_p$ is unimodular and $p > 2$, the orthogonal complements of $N_p$ and $\Z_p[x_{1,p}, \ldots, x_{n-4, p}]$ in $M_p$ are isometric \cite[Corollary 5.4.1]{K}.  Therefore, $G_p$ is isometric to $\langle 1, -\Delta, p, -p\Delta \rangle$, which is an anisotropic $\Z_p$-lattice representing every element in $\Z_p$.  However, $G_p$ does not primitively represent any element in $p^2\Z_p$.

We choose a positive integer $a$ which is represented by $G$ and such that $\ord_p(a) \geq 2$.  Thus for any nonnegative integer $e$, $p^ea$ is not primitively represented by $G_p$,  hence by \cite[Corollary 5.4.1]{K} again $N_p \perp \langle p^ea \rangle$ is not primitively represented by $M_p$.   However, we claim that either $N_p\perp \langle a \rangle$ or $N_p\perp \langle pa \rangle$ is primitively represented by $L_p$.

Let $v$ be a primitive vector of $L_p$ which is orthogonal to $M_p$.   Choose two vectors $w$ and $x$ in $G_p$ such that $Q(w) = -Q(v)$ and $Q(x) = a$.
Let
$$\delta = \begin{cases}
0 & \mbox{ if $\ord_p(a)\neq \ord_p(Q(v))$},\\
2 & \mbox{ if $\ord_p(a) = \ord_p(Q(v))$}.
\end{cases}$$
If $\ord_p(a) < \ord_p(Q(v))$, then $Q(x + v) = a + Q(v) = a \lambda^2$ for some $\lambda\in \Z_p^\times$ by the Local Square Theorem \cite[63:1]{OM}.   If, on the other hand, $\ord_p(a) \geq \ord_p(Q(v))$,
then
$$Q\left(\left(1 - p^\delta \frac{a}{Q(v)}\right)w + \left(1 + p^\delta \frac{a}{Q(v)}\right)v\right) = 4p^\delta a.$$
Thus, there is always a primitive vector $z$ of $L_p$ such that $Q(z) = p^\delta a$.  Moreover, this $z$ is of the form $z_0 + \eta v$, where $\eta \in \Z_p^\times$ and $z_0 \in M_p$, and $B(z_0, N_p\perp \Z_p[v]) = 0$.  Therefore, by Lemma \ref{localprim}, $N_p\perp \Z_p[z]$ is a primitive sublattice of $L_p$.

Note that, by our choice of $a$, there exists a vector $y \in G$ such that $Q(y) = p^\delta a$.  Let $K$ be the sublattice of $L$ defined by
$$K_q = \begin{cases}
N_p\perp \Z_p[y] & \mbox{ if $q = p$},\\
((\Q N \perp \Q[y]) \cap L)_q & \mbox{ if $q \neq p$}.
\end{cases}$$
Clearly, $K$ contains the full sublattice $N \perp \Z[y]$, and hence $d(K)$ is bounded above by a constant depending only on $M$.  As a result, the last successive minimum of $K$ is also bounded above by a constant depending only on $M$.    It is also clear from the construction that $K$ is primitively represented by the genus of $L$.  Consequently, $K$ is primitively represented by $L$, by virtue of the strict $(n-3)$-regularity of $L$.  However, since $K_p \cong N_p\perp \langle p^\delta a \rangle$ is not primitively represented by $M_p$, $K$ cannot be primitively represented by $M$.  By Lemma \ref{boundlemma},  $\mu_{n+1}(L)$ is bounded above by a constant depending only on $M$.
\end{proof}

\begin{proof}[Proof of Theorem \ref{mainthm} $(n \geq 3)$]
Let $L$ be a normalized strictly $n$-regular lattice of rank $n + 4$, $n \geq 3$.  By Lemma \ref{strict}, $L$ is an $n$-regular lattice of rank $\geq n + 4$.  Thus, by Proposition \ref{mun+3}, $\mu_1(L), \ldots, \mu_{n+3}(L)$ are bounded above by an absolute constant.  Let $M$ be an $(n+3)\times (n+3)$ section of $L$.  Note that $d(M)$ is bounded above by an absolute constant, and the number of possible isometry classes of $M$ is independent of $L$.   We then apply Proposition \ref{boundmin} to this $M$ and conclude that $\mu_{n+4}(L)$ is bounded above by an absolute constant.  Since $d(L) \leq \mu_1(L) \cdots \mu_{n+4}(L)$ which is bounded above by an absolute constant, the number of possible isometry classes of $L$ is finite.
\end{proof}


\section{Proof of Theorem \ref{mainthm} ($n = 2$)}

In this section, $L$ is always a normalized strictly 2-regular lattice of rank 6.   By Proposition \ref{mun+3}, the first five successive minima of $L$ are bounded above by an absolute constant.  We now move on to bound $\mu_6(L)$, which will lead to an absolute bound on $d(L)$ and Theorem \ref{mainthm} will follow.

Let $W$ be a quinary quadratic space.  It must represent its own discriminant, and hence it must contain a quaternary space $U$ whose discriminant is a square. The isometry class of $U$ is uniquely determined by $W$.  By Lemma \ref{simple1}, there is a prime $\ell$ such that $U_\ell$ is anisotropic.  We call $\ell$ a {\em characteristic} prime for $W$.  It is easy to see that the set of characteristic primes for $W$ is finite and depends only on the similarity class of $W$.

\begin{lem} \label{lemmaformu6}
Let $W$ be a quinary quadratic space and $\ell$ be a characteristic prime for $W$.  Let $H$ be a lattice on $W$.  If $\alpha$ is primitively represented by $H$ and $\alpha\, d(H)$ is a square, then there exists an even positive integer $e$, depending only on $\alpha$ and $H$, such that for any positive $\beta \in \ell^e\Z$, $\langle \alpha, \beta \rangle$ cannot be primitively represented by $H_\ell$.
\end{lem}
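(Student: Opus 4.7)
The plan is to argue by contradiction using a compactness argument in $H_\ell$. Suppose no such $e$ exists; then for every positive integer $m$ there is a positive integer $\beta_m \in \ell^{2m}\Z$ such that $\langle \alpha, \beta_m \rangle$ is primitively represented by $H_\ell$. Each such primitive representation produces a pair $(v_m, w_m) \in H_\ell \times H_\ell$ with $Q(v_m) = \alpha$, $Q(w_m) = \beta_m$, $B(v_m, w_m) = 0$, and $\Z_\ell[v_m, w_m]$ a primitive rank-$2$ sublattice of $H_\ell$.

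The first step is to verify that the locus
$$T := \{(v, w) \in H_\ell \times H_\ell : Q(v) = \alpha,\ B(v, w) = 0,\ \Z_\ell[v, w] \text{ is primitive of rank } 2 \text{ in } H_\ell\}$$
is compact. The ambient space $H_\ell \times H_\ell$ is compact; the conditions $Q(v) = \alpha$ and $B(v, w) = 0$ are closed; and primitivity of $\Z_\ell[v, w]$ is equivalent to $\bar v, \bar w$ being linearly independent in the finite discrete quotient $H_\ell/\ell H_\ell$, which is a clopen condition. Hence $T$ is a closed subset of a compact space, and is therefore compact.

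The second step is to pass to a limit. By compactness, a subsequence of $(v_m, w_m)$ converges to some $(v, w) \in T$. By continuity of $Q$, we get $Q(w) = \lim \beta_m = 0$, while membership of $(v, w)$ in $T$ forces $w$ to be primitive in $H_\ell$, in particular nonzero.

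The third step is to invoke the characteristic prime hypothesis to reach a contradiction. Since $\alpha$ is primitively represented by $H$, there exists $v_0 \in H$ with $Q(v_0) = \alpha$. The splitting $W = \langle v_0 \rangle \perp v_0^\perp$ yields $d(v_0^\perp) \equiv \alpha\, d(W) \equiv \alpha\, d(H) \pmod{(\Q^\times)^2}$, which is a square by hypothesis. The uniqueness of the quaternary subspace of $W$ with square discriminant then forces $v_0^\perp \cong U$ as $\Q$-spaces, so $(v_0^\perp)_\ell \cong U_\ell$ is anisotropic because $\ell$ is a characteristic prime. Applying Witt's theorem to the vectors $v, v_0 \in W_\ell$ of common norm $\alpha$ yields an isometry of $W_\ell$ carrying $v_0$ to $v$, hence carrying $(v_0^\perp)_\ell$ isometrically onto $v^\perp$. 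Therefore $v^\perp$ is anisotropic over $\Q_\ell$, contradicting the existence of the nonzero vector $w \in v^\perp$ with $Q(w) = 0$.

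The main obstacle is the compactness claim for $T$; one must in particular recognize that primitivity of a rank-$2$ sublattice is \emph{clopen}, not merely open, because it is detected by the reduction map $H_\ell \to H_\ell/\ell H_\ell$ into a finite discrete space, so the limiting pair $(v, w)$ continues to satisfy this condition. Once this is in place the contradiction is immediate. To produce the even positive integer $e$ required by the lemma, one takes $e$ to be the smallest even positive integer strictly greater than $\sup \{\ord_\ell(Q(w)) : (v, w) \in T\}$, which is finite by the above argument (and $e = 2$ if $T$ is empty).
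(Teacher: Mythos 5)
Your proof is correct, and while it rests on the same two pillars as the paper's --- the anisotropy at $\ell$ of the orthogonal complement of a norm-$\alpha$ vector, and an $\ell$-adic compactness argument --- it packages them differently. The paper fixes a primitive vector $y \in H_\ell$ with $Q(y)=\alpha$, applies Lemma~\ref{simple2} to its orthogonal complement $G^{(y)}_\ell$ (which is anisotropic for exactly the reason you spell out: its ambient space has square discriminant, hence is isometric to $U_\ell$) to get an exponent $e(y)$, and then achieves uniformity over all such $y$ by noting that $\ord_\ell(d(G^{(y)}_\ell)) \le \ord_\ell(\alpha)+\ord_\ell(d(H))$, so only finitely many isometry classes of complements occur and $e=\max_y e(y)$ works. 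You instead run a single compactness argument on the set $T$ of pairs $(v,w)$, with the key (and correct) observation that primitivity of $\Z_\ell[v,w]$ is detected in the finite quotient $H_\ell/\ell H_\ell$ and is therefore clopen, so that $T$ is compact and $\sup_{(v,w)\in T}\ord_\ell(Q(w))$ is finite. This absorbs both Lemma~\ref{simple2} and the finiteness-of-isometry-classes step into one argument, making the proof more self-contained; it also has the virtue of making explicit the anisotropy verification (via Witt's theorem and the uniqueness of the square-discriminant quaternary subspace) that the paper leaves implicit when it invokes Lemma~\ref{simple2}. Either route yields the same $e$, and your extraction of $e$ from the finite supremum at the end is sound.
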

\begin{proof}
Let $\mathcal S$ be the set of primitive vectors $y$ in $H_\ell$ such that $Q(y) = \alpha$. For each $y \in \mathcal S$, let $G^{(y)}_\ell$ be the orthogonal complement of $y$ in $H_\ell$.   By Lemma \ref{simple2}, for each $y \in \mathcal S$, there exists a positive even integer $e(y)$ such that $G^{(y)}_\ell$  does not primitively represent any element in $\ell^{e(y)}\Z_\ell$.  Note that $\ord_\ell(d(G^{(y)}_\ell)) \leq \ord_\ell(\alpha) + \ord_\ell(d(H))$, and hence the number of possible isometry classes of $G^{(y)}_\ell$ is finite.  We can then let $e$  be the largest of all the $e(y)$.
\end{proof}

\begin{proof}[Proof of Theorem \ref{mainthm} ($n = 2$)]
Suppose that $L$ is a normalized strictly 2-regular lattice of rank 6.   As is in the proof of Proposition \ref{mun+3}, we say that a numerical quantity is bounded if its absolute value is bounded above by an absolute constant.  Since $\mu_5(L)$ is bounded, there exists a primitive quinary sublattice $H$ of $L$ whose discriminant is bounded.  Let $\ell$ be a characteristic prime for the space $\Q H$.  This $\ell$ depends only on the similarity class of $\Q H$, hence it is bounded.

Let us first suppose that  $\ord_\ell(\last(L_\ell)) > \ord_\ell(\last(H_\ell)) + 4$.  Let $L_\ell = L_0 \perp L_1$, where $L_1$ is the last component of a Jordan decomposition of $L_\ell$.  Since $H_\ell$ is represented by $L_\ell$, it is necessary that $H_\ell$ is already represented by $L_0$; see \cite[Theorems 1 and 3]{OM2}.   Therefore, by applying a suitable $\Lambda_{2\ell}^{(\br)}$ transformation with a sequence $\mathbf r$ of bounded length admissible to $L$ and $\ell$, we may assume that $L_\ell \cong \Hyper \perp N_\ell \perp \langle \ell^t \epsilon\rangle$, where $\ord_\ell(d(N_\ell))$ is bounded and $\langle \ell^t\epsilon \rangle$ is the last Jordan component of $L_\ell$ such that $t = \ord_\ell(\last(L_\ell))$.  We also replace $H$ by $\Lambda_{2\ell}^{(\mathbf r)}(H)$ whose underlying quadratic space is similar to $\Q H$.  As a result,  the prime $\ell$ is a characteristic prime for the space spanned by this new $H$.  By Remark \ref{sublattice}, $d(H)$ remains bounded after the replacement.

Fix a positive integer $\alpha$ which is primitively represented by $H$ and such that $\alpha\, d(H)$ is a square.  Choose a primitive vector $z$ of $H$ such that $Q(z) = \alpha$.  By Lemma \ref{lemmaformu6}, there exists a positive even integer $e$ such that for any positive integer $b$, $\langle \alpha, \ell^eb \rangle$ is not primitively represented by $H_\ell$.  Clearly, this $e$ is bounded because it depends only on $\alpha$ and $H$.  Let $w$ be a vector in the orthogonal complement of $z$ in $H$, and let $\beta = Q(w)$.   All of these together show that for any integer $f$, the $\Z_\ell$-lattice $\Z_\ell[z, \ell^{\frac{e}{2}} f w]$ is not primitively by $H_\ell$.

The lattice $L_\ell$ contains a primitive sublattice which is isometric to $\langle \alpha, -\alpha \rangle \perp N_\ell \perp \langle \ell^t\epsilon \rangle$.   Since the rank of $\langle -\alpha \rangle \perp N_\ell$ is 4, its ambient space must be universal and hence there is a positive integer $f$ such that $\ell^e\beta f^2$ is represented by $\langle -\alpha \rangle \perp N_\ell$.  This $f$ can be chosen to be bounded because the number of possible isometry classes of $N_\ell$ is bounded.   Now, let us further assume that $t >  e + \ord_\ell(\beta f^2) + 3$.   Then $\ell^e\beta f^2$ is primitively represented by $\langle -\alpha \rangle \perp N_\ell \perp \langle \ell^t\epsilon \rangle$ in a way that we can apply Lemma \ref{localprim} to claim that $\langle \alpha, \ell^e\beta f^2 \rangle$ is primitively represented by $L_\ell$.

Define a lattice $K$ on the space $\Q[z, w]$ by setting
$$K_q = \begin{cases}
\Z_\ell[z, \ell^{\frac{e}{2}} f w ] & \mbox{ if $q = \ell$},\\
(\Q[z, w]\cap L)_q & \mbox{ if $q \neq \ell$}.
\end{cases}$$
Note that $d(K)$ is bounded.  Moreover, $K$ is primitively represented by the genus of $L$, implying that $K$ is primitively represented by $L$ itself.  However, it follows from the construction that $K_\ell$ is not primitively represented by $H_\ell$, meaning that $K$ can never be primitively represented by $H$.  Therefore, by Lemma \ref{boundlemma}, there exists an absolute constant $C$ such that
$$\mu_6(L) \leq C\, \max\{\mu_2(K), \mu_5(H)\} \leq C\, \max\{\alpha, \ell^e \beta f^2, \mu_5(H) \},$$
which means that $\mu_6(L)$ is bounded.

So, we have shown that $\mu_6(L)$ is bounded as long as $\ord_\ell(\last(L_\ell))$ is greater than both $\ord_\ell(\last(H_\ell)) + 4$ and $e + \ord_\ell(\beta f^2) + 3$.  Now, let us assume that this is not the case; in particular, $\ord_\ell(\last(L_\ell))$ is bounded.  As is done before, we may assume that $L_\ell \cong \Hyper \perp J_\ell$ for some quaternary sublattice $J_\ell$.   We choose $\alpha, z$, and $w$ as before.  Since the dimension of $\Q_\ell J_\ell$ is 4, $\alpha c^2$ is primitively represented by $J_\ell$ for some $c \in \Q^\times$.  But now the number of possible isometry classes of $J_\ell$ is bounded.  Therefore, the number of such $c$ is also bounded.    By Lemma \ref{lemmaformu6}, we can find a bounded positive even integer $e'$ such that $\langle \alpha c^2, \ell^{e'} \beta \rangle$ is not primitively represented by $H_\ell$ for any $c$ in discussion.  Define a lattice $K'$ on $\Q[z, w]$ by setting
$$K'_q = \begin{cases}
\Z_\ell[cz, \ell^{\frac{e'}{2}} w] & \mbox{ if $q = \ell$},\\
(\Q[z, w]\cap L)_q & \mbox{ if $q \neq \ell$}.
\end{cases}$$
It is clear that $d(K')$ is bounded, $K_q'$ is primitively represented by $L_q$ for all $q \neq \ell$, and $K_\ell' \cong \langle \alpha c^2, \ell^{e'}\beta \rangle$ is not primitively represented by $H_\ell$.  However,  at $\ell$, note that $\ell^{e'}\beta$ is primitively represented by $\Hyper$.  Therefore, $K_\ell'$ is also primitively represented by $L_\ell$.  Thus, $K'$ is primitively represented by $L$ but $K'$ is not primitively represented by $H$.  By Lemma \ref{boundlemma},  $\mu_6(L)$ is bounded.
\end{proof}

\end{document}